\numberwithin{equation}{section}
\theoremstyle{plain}
\newtheorem{theorem}{Theorem}[section]
\newtheorem{proposition}[theorem]{Proposition}
\newtheorem{lemma}[theorem]{Lemma}
\theoremstyle{remark}
\newtheorem{remark}[theorem]{Remark}
\newtheorem{example}[theorem]{Example}
\theoremstyle{definition}
\newtheorem{question}[theorem]{Question}
\renewcommand{\atop}[2]{\genfrac{}{}{0pt}{}{#1}{#2}}
\newcommand{\HH}{\mathcal{H}}
\newcommand{\QQ}{\mathcal{Q}}
\newcommand{\BB}{\mathcal{B}}
\newcommand{\R}{\mathbb{R}}
\newcommand{\Z}{\mathbb{Z}}
\newcommand{\N}{\mathbb{N}}
\newcommand{\iii}{\mathtt{i}}
\newcommand{\jjj}{\mathtt{j}}
\newcommand{\eps}{\varepsilon}
\DeclareMathOperator{\dimh}{dim_H}
\DeclareMathOperator{\dist}{dist}
\DeclareMathOperator{\diam}{diam}
\DeclareMathOperator{\proj}{proj}
\DeclareMathOperator{\vol}{vol}
\DeclareMathOperator{\spt}{spt}
\begin{document}

\title{Upper Conical density results for general measures on $\R^n$}

\author[M.\ Cs\"ornyei]{Marianna Cs\"ornyei}
\address{Department of Mathematics \\
         University College London \\
         Gower Street \\
         London WC1E 6BT \\
         United Kingdom}
\email{mari@math.ucl.ac.uk}

\author[A.\ K\"aenm\"aki]{Antti K\"aenm\"aki}
\author[T.\ Rajala]{Tapio Rajala}
\author[V.\ Suomala]{Ville Suomala}
\address{Department of Mathematics and Statistics \\
         P.O. Box 35 (MaD) \\
         FI-40014 University of Jyv\"askyl\"a \\
         Finland}
\email{antakae@maths.jyu.fi}
\email{tamaraja@maths.jyu.fi}
\email{visuomal@maths.jyu.fi}

\dedicatory{Dedicated to Professor Pertti Mattila on the occasion of his 60th birthday}
\thanks{AK acknowledges the support of the Academy of Finland (project \#114821) and TR is indebted to the Ville, Yrj\"o and Kalle V\"ais\"al\"a Fund.}
\subjclass[2000]{Primary 28A80; Secondary 28A75, 28A12.}
\keywords{Upper conical density, Hausdorff dimension, homogeneity of measures, rectifiability}
\date{\today}

\begin{abstract}
  We study conical density properties of general Borel measures on Euclidean spaces. Our results are analogous to the previously known result on the upper density properties of Hausdorff and packing type measures.
\end{abstract}

\maketitle

\section{Introduction}

The extensive study of upper conical density properties for Hausdorff
measures was pioneered by Besicovitch who studied the
conical density properties of purely $1$-unrectifiable fractals on the
plane. Since Besicovitch's time upper density
results have played
an important role in geometric measure theory. Due to the works of
Marstrand \cite{Marstrand1954}, Salli \cite{Salli1985}, Mattila
\cite{Mattila1988}, and others, the
upper conical density
properties of Hausdorff measures $\HH^s$ for all values of $0\leq
s\leq n$ are very well understood. There are also analogous results for many
(generalised) Hausdorff and packing measures, see
\cite{KaenmakiSuomala2008} and references therein.
Conical density results are useful since they give
information on the distribution of the measure if the values of the
measure are known on some small balls. The main applications deal with
rectifiability, see \cite{Mattila1995}, but often upper conical density
theorems may also be viewed as some kind of anti-porosity theorems. See
\cite{Mattila1988} and \cite{KaenmakiSuomala2008} for more on this topic.

When working with a Hausdorff or packing type measure $\mu$, it is useful to
study densities such as
\[\limsup_{r\downarrow0}\mu\bigl(X(x,r,V,\alpha)\bigr)/h(2r)\]
where $h$ is the gauge function used to construct the measure $\mu$
and $X(x,r,V,\alpha)$ is a cone around the point $x$ (see
\S2 below for the formal definition). However, most measures are so
unevenly distributed that there are no gauge functions that could be
used to approximate the measure in small balls. This is certainly the case for many self-similar and multifractal-type measures.
For these measures the above quoted results give no information. To
obtain conical density results for general measures
it seems natural to replace the value of the gauge $h$ in the
denominator by the measure of the ball $B(x,r)$ and consider upper
densities such as
\[\limsup_{r\downarrow0}\mu\bigl(X(x,r,V,\alpha)\bigr)/\mu\bigl(B(x,r)\bigr).\]
Our purpose in this paper is to study densities of this and more
general type for locally finite Borel regular measures on
$\R^n$. In particular, we will answer some of the problems posed in
\cite{KaenmakiSuomala2008}.

The paper is organised as follows. In Section \ref{sec:not}, we
setup some notation and discuss auxiliary results that will be
needed later on. In particular, we recall a dimension estimate for
average homogeneous measures obtained by E.\ J\"arvenp\"a\"a and M.\
J\"arvenp\"a\"a in \cite{JarvenpaaJarvenpaa2005}.
In Section \ref{sec:gen}, we prove an upper density result valid
for all locally finite Borel regular measures on $\R^n$. The result gives a positive answer to \cite[Question 4.3]{KaenmakiSuomala2008}. It shows that
around typical points a locally finite Borel regular measure cannot be
distributed so that it lies mostly on only one one-sided cone at all
small scales. In Section \ref{sec:conical}, we obtain more detailed
information on the distribution of the measure $\mu$ provided that its
Hausdorff dimension is bounded from below. The result, Theorem
\ref{thm:dimh_positive} is analogous to the results of Mattila
\cite{Mattila1988}, and K\"aenm\"aki and Suomala
\cite{KaenmakiSuomala2004,KaenmakiSuomala2008}, obtained before for
Hausdorff and packing type measures, and it gives strong insight to
\cite[Question 4.1]{KaenmakiSuomala2008}. In Section \ref{sec:ex}, we
give a negative answer to \cite[Question 4.2]{KaenmakiSuomala2008} and
moreover, we show that Theorem \ref{thm:dimh_positive} is not valid if
we only assume that the measure is purely $m$-unrectifiable.

\section{Notation and preliminaries} \label{sec:not}

We start by introducing some notation. Let $n \in \N$, $m \in \{ 0,\ldots,n-1 \}$, and $G(n,n-m)$ denote the space of all $(n-m)$-dimensional linear subspaces of $\R^n$. The unit sphere of $\R^n$ is denoted by $S^{n-1}$. For $x \in \R^n$, $\theta \in S^{n-1}$, $0 \le \alpha \le 1$, and $V \in G(n,n-m)$, we set
\begin{align*}
  H(x,\theta,\alpha) &= \{ y \in \R^n : (y-x) \cdot \theta > \alpha
  |y-x| \}, \\
  X^+(x,\theta,\alpha) &= H\bigl( x,\theta,(1-\alpha^2)^{1/2} \bigr), \\
  X(x,V,\alpha) &= \{ y \in \R^n : \dist(y-x,V) < \alpha|y-x| \}.
\end{align*}
We also denote $X^{+}(x,r,\theta,\alpha) = B(x,r) \cap
X^{+}(x,\theta,\alpha)$ and $X(x,r,V,\alpha) = B(x,r) \cap
X(x,V,\alpha)$, where $B(x,r)$ is the closed ball centred at $x$ with
radius $r > 0$. Observe that $X^+(x,\theta,\alpha)$ is the one side of
the two-sided cone $X(x,\ell,\alpha)$ where $\ell \in G(n,1)$ is the
line pointing to the direction $\theta$. We usually use the ``$X$ notation'' for very narrow cones whereas the ``$H$ cones'' are considered as ``almost
half-spaces''. If $V \in G(n,n-m)$, we denote the orthogonal projection
onto $V$ by $\proj_V$. Furthermore, if $B = B(x,r)$ and $t>0$, then with the notation $tB$, we mean the ball $B(x,tr)$.

By a measure we will always mean a finite nontrivial Borel regular (outer)
measure defined on all subsets of some Euclidean space $\R^n$. Since
all our results are local, and valid only almost everywhere, we could
easily replace the finiteness condition by assuming that $\mu$ is
almost everywhere locally finite in the sense that
$\mu(\{x\in\R^n : \mu(B(x,r))=\infty\text{ for all }r>0\})=0$.
The support of the measure $\mu$ is denoted by $\spt(\mu)$.
The \emph{(lower) Hausdorff dimension of the measure $\mu$} is defined by
\begin{align*}
  \dimh(\mu) &= \inf\{ \dimh(A) : A \text{ is a Borel set with }
  \mu(A)>0 \},
\end{align*}
where $\dimh(A)$
denotes the Hausdorff
dimension of the set $A \subset \R^n$, see \cite[\S
10]{Falconer1997}. With the notation $\mu|_F$, we mean the restriction
of the measure $\mu$ to a set $F \subset \R^n$, defined by $\mu|_F(A)
= \mu(F \cap A)$ for $A \subset \R^n$. Notice that trivially
$\dimh(\mu) \le \dimh(\mu|_F)$ whenever $F$ is a Borel set with
$\mu(F)>0$.
We will use the notation $\HH^s$ to denote the \emph{$s$-dimensional
  Hausdorff measure} on $\R^n$. More generally, we denote by $\HH_h$
a \emph{generalised Hausdorff measure} constructed using a gauge function
$h\colon(0,r_0)\rightarrow(0,\infty)$, see \cite[\S 4.9]{Mattila1995}.

Next we will recall the definition of the average homogeneity from
\cite{JarvenpaaJarvenpaa2005}.
If $k \in \N$, then a set $Q \subset \R^n$ is called a \emph{$k$-adic
  cube} provided that $Q = [0,k^{-l})^n + k^{-l}z$ for some $l \in \N$
and $z \in \Z^n$. The collection of all $k$-adic cubes $Q \subset
[0,1)^n$ with side length $k^{-l}$ is denoted by $\QQ_k^l$. If $Q \in
\QQ_k^l$ and $t>0$, then with the notation $tQ$, we mean the cube
centred at the same point as $Q$ but with side length $tk^{-l}$.

Let $k \in \N$ and $I_k = \{ 1,\ldots,k^n \}$. If $\iii =
(i_1,\ldots,i_l) \in I_k^l$ and $i \in I_k$, then we set $\iii,i =
(i_1,\ldots,i_l,i) \in I_k^{l+1}$. Furthermore, if $\iii =
(i_1,i_2,\ldots) \in I_k^\infty := I_k^\N$ (or $\iii \in I_k^l$) and
$j \in \N$ (or $j \le l$), then $\iii|_j := (i_1,\ldots,i_j) \in
I_k^j$. For a given measure $\mu$, we will enumerate
$k$-adic cubes $Q_i \in \QQ_k^1$ so that $\mu(Q_i) \le \mu(Q_{i+1})$
whenever $i \in I_k \setminus \{ k^n \}$. Given $l \in \N$ and $\iii
\in I_k^l$, we continue inductively by enumerating the cubes
$Q_{\iii,i} \in \QQ_k^{l+1}$ with $Q_{\iii,i} \subset Q_\iii \in
\QQ_k^l$ so that $\mu(Q_{\iii,i}) \le \mu(Q_{\iii,i+1})$ whenever $i
\in I_k \setminus \{ k^n \}$. Bear in mind that this enumeration
depends, of course, on the measure. The \emph{(upper) $k$-average
  homogeneity of $\mu$ of order $i \in I_k$} is defined to be
\begin{equation*}
  \hom_k^i(\mu) = \limsup_{l \to \infty} \tfrac{k^n}{l} \sum_{j=1}^l
  \sum_{\iii \in I_k^j} \mu(Q_{\iii,i}).
\end{equation*}
For us it is essential that the Hausdorff dimension of a measure may
be bounded above in terms of homogeneity. The following result was
obtained by E.\ J\"arvenp\"a\"a and M.\ J\"arvenp\"a\"a in \cite{JarvenpaaJarvenpaa2005}.

\begin{theorem} \label{thm:jarv}
If $\mu$ is a probability measure on $[0,1)^n$ and $\hom_{k}^i(\mu)\leq k^n \eta$ for some $0\leq \eta\leq k^{-n}$, then
\[\dimh(\mu)\leq-\frac{1}{\log
  k}\left(i\eta\log\eta+
  \left(1-i\eta\right)\log\left(\frac{1-i\eta}{k^n-i}\right)\right).\]
\end{theorem}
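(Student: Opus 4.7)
The plan is to convert the upper bound on $\dimh(\mu)$ into a decay rate of the $\mu$-masses of $k$-adic cubes, express this rate through conditional entropies, and finally apply two concavity steps that reduce the homogeneity hypothesis to an entropy maximisation on a single probability vector on $I_k$. Concretely, for $\mu$-a.e.\ $x \in [0,1)^n$ there is a unique address $\iii(x) \in I_k^\infty$ with $x \in Q_{\iii(x)|_l}$ for every $l$, and $Q_{\iii(x)|_l}\subset B(x,\sqrt n\,k^{-l})$. By the standard relation between local dimension and Hausdorff dimension together with $\dimh(\mu)\leq\dimh(F)$ for any Borel $F$ with $\mu(F)>0$, the theorem will follow once it is shown that
\[\liminf_{l\to\infty}\frac{-\log\mu(Q_{\iii(x)|_l})}{l\log k}\]
is bounded above by the right-hand side of the claimed inequality on some set of positive $\mu$-measure.

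By Fatou's lemma applied to the non-negative functions $f_l(x):=-\log\mu(Q_{\iii(x)|_l})/(l\log k)$, it suffices to bound $\liminf_l\int f_l\,d\mu$. Writing $p_{\iii,i'}:=\mu(Q_{\iii,i'})/\mu(Q_\iii)$, the telescoping identity $-\log\mu(Q_{\iii|_l})=\sum_{j=1}^l-\log p_{\iii|_{j-1},i_j}$ together with Fubini gives
\[\int-\log\mu(Q_{\iii|_l})\,d\mu=\sum_{j=1}^l\bar H_j,\qquad\bar H_j:=-\sum_{\jjj\in I_k^{j-1}}\mu(Q_\jjj)\sum_{i'\in I_k}p_{\jjj,i'}\log p_{\jjj,i'}.\]
Two applications of concavity of the entropy $H$ now reduce the problem to a single optimisation: first $\bar H_j\leq H(b_{j,\cdot})$ with $b_{j,i'}:=\sum_{\jjj}\mu(Q_\jjj)p_{\jjj,i'}$, and then $\frac{1}{l}\sum_{j=1}^lH(b_{j,\cdot})\leq H(\hat b_l)$ with $\hat b_{l,i'}:=\frac{1}{l}\sum_{j=1}^l b_{j,i'}$. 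The vector $\hat b_l$ is a probability vector on $I_k$ whose components are non-decreasing in $i'$, because the enumeration forces $p_{\jjj,1}\leq\cdots\leq p_{\jjj,k^n}$. The hypothesis $\hom_k^i(\mu)\leq k^n\eta$ translates directly into $\limsup_l\hat b_{l,i}\leq\eta$, and monotonicity of $\hat b_l$ propagates this to $\limsup_l\hat b_{l,i'}\leq\eta$ for every $i'\leq i$.

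The remaining task is to maximise the entropy of a non-decreasing probability vector $q\in[0,1]^{k^n}$ subject to $q_{i'}\leq\eta$ for every $i'\leq i$. A short Lagrange/convexity computation identifies the maximiser as $q_{i'}=\eta$ for $i'\leq i$ and $q_{i'}=(1-i\eta)/(k^n-i)$ for $i'>i$; this vector is feasible precisely because the hypothesis $\eta\leq k^{-n}$ ensures $(1-i\eta)/(k^n-i)\geq\eta$, and its entropy equals $-i\eta\log\eta-(1-i\eta)\log\bigl((1-i\eta)/(k^n-i)\bigr)$. Dividing by $\log k$ gives the estimate in the theorem. The main obstacle, in my view, is conceptual rather than computational: the monotonicity produced by the enumeration must be carried through both concavity steps all the way to $\hat b_l$, since it is the resulting family of bounds $q_{i'}\leq\eta$ for every $i'\leq i$, and not merely the single constraint $q_i\leq\eta$, that accounts for the denominator $k^n-i$ in the theorem rather than the weaker $k^n-1$.
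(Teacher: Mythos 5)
The paper cites Theorem~\ref{thm:jarv} from \cite{JarvenpaaJarvenpaa2005} without supplying a proof, so there is no internal argument to compare against; I am therefore only assessing your proposal on its own terms, and it appears to be correct and complete. The reduction to the $k$-adic lower local dimension via Fatou is standard; the telescoping identity $\int-\log\mu(Q_{\iii|_l})\,d\mu=\sum_{j=1}^l\bar H_j$, the two Jensen steps $\bar H_j\leq H(b_{j,\cdot})$ and $\tfrac1l\sum_{j}H(b_{j,\cdot})\leq H(\hat b_l)$, and above all the observation that both averaging steps preserve the monotonicity in $i'$ inherited from the enumeration $\mu(Q_{\jjj,i'})\leq\mu(Q_{\jjj,i'+1})$ are exactly what turn the single hypothesis $\limsup_l\hat b_{l,i}\leq\eta$ into the full family of constraints $q_{i'}\leq\eta$ for $i'\leq i$ that the final entropy maximisation needs; without that propagation one would only get the weaker denominator $k^n-1$. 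The feasibility verification $(1-i\eta)/(k^n-i)\geq\eta\Leftrightarrow\eta\leq k^{-n}$ and the maximal entropy $-i\eta\log\eta-(1-i\eta)\log\bigl((1-i\eta)/(k^n-i)\bigr)$ both check out against the statement. Two small things worth writing out in a final version: the harmless index shift between $\hat b_{l,i}$ and $\hom_k^i(\mu)/k^n$ (a boundary term that vanishes after dividing by $l$), and the $\eps$-continuity step needed to pass from the $\limsup$ form of the homogeneity hypothesis, via ``$\hat b_{l,i}\leq\eta+\eps$ eventually,'' to the asymptotic bound on $H(\hat b_l)$.
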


It is well known that although most measures on $\R^n$ are
non-doubling, still ``around typical points most scales are
doubling''. This somewhat inexact statement is made quantitative in the
following lemma. We follow the convention according to which
$c=c(\ldots)$ denotes a constant that depends only on the parameters listed
inside the parentheses.

\begin{lemma} \label{thm:doubling_scales}
  If $n,k \in \N$ and $0<p<1$, then there exists a constant $c = c(n,k,p) > 0$ so that for every measure $\mu$ on $\R^n$ and for each $\gamma>0$ we have
  \begin{equation*}
    \liminf_{l \to \infty} \tfrac{1}{l}\#\bigl\{ j \in \{ 1,\ldots,l
    \} : \mu\bigl( B(x,\gamma k^{-j}) \bigr) \ge c\mu\bigl( B(x,\gamma
    k^{-j+1}) \bigr) \bigr\} \ge p
  \end{equation*}
  for $\mu$-almost every $x \in \R^n$.
\end{lemma}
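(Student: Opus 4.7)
The plan is to control the number of ``bad'' scales through a single pointwise telescoping estimate, and then invoke the standard bound on the upper local dimension. Write
\[
a_j(x):=\frac{\mu(B(x,\gamma k^{-j}))}{\mu(B(x,\gamma k^{-j+1}))}\in(0,1]
\]
for $x\in\spt(\mu)$ (the complementary set is $\mu$-negligible, and denominators are strictly positive on $\spt(\mu)$), and set $N_c(x,l):=\#\{1\le j\le l:a_j(x)<c\}$. The asserted conclusion is equivalent to $\limsup_{l\to\infty}N_c(x,l)/l\le 1-p$ for $\mu$-a.e.\ $x$, which is what I would aim for.

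The crucial observation is the telescoping bound
\[
(-\log c)\,N_c(x,l)\le\sum_{j=1}^l(-\log a_j(x))=\log\mu(B(x,\gamma))-\log\mu(B(x,\gamma k^{-l})),
\]
valid since each $j$ with $a_j(x)<c$ contributes at least $-\log c$ to the sum of nonnegative terms. This trades the combinatorial count for an analytic quantity whose growth rate in $l$ is governed by the decay of $\mu(B(x,\gamma k^{-l}))$ as $l\to\infty$.

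To control that decay I would use the standard fact that $\limsup_{r\downarrow 0}\log\mu(B(x,r))/\log r\le n$ for $\mu$-a.e.\ $x\in\R^n$. This is a brief Vitali argument: if the set $A=\{x:\limsup_{r\downarrow 0}\log\mu(B(x,r))/\log r>n+\eps\}$ had positive $\mu$-measure (we may assume $A$ bounded), then $A$ would admit, for any $\delta>0$, a fine cover by balls $B(x,r)$ with $r<\delta$ and $\mu(B(x,r))<r^{n+\eps}$; a disjoint Vitali subfamily $\{B(x_i,r_i)\}$ covering $\mu$-almost all of $A$ satisfies $\sum_i r_i^n\le C$ by disjointness in a bounded region, so $\mu(A)\le\sum_i r_i^{n+\eps}\le C\delta^\eps\to 0$, a contradiction. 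Specialising to $r=\gamma k^{-l}$ yields
\[
\limsup_{l\to\infty}\frac{-\log\mu(B(x,\gamma k^{-l}))}{l}\le n\log k\qquad\text{for $\mu$-a.e.\ }x.
\]

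Combining, after dividing the telescoping bound by $l$ and noting that $\log\mu(B(x,\gamma))/l\to 0$,
\[
\limsup_{l\to\infty}\frac{N_c(x,l)}{l}\le\frac{n\log k}{-\log c}\qquad\text{for $\mu$-a.e.\ }x,
\]
so the choice $c=c(n,k,p):=k^{-n/(1-p)}$ makes the right-hand side equal to $1-p$, proving the lemma. The main obstacle this plan sidesteps is that a direct per-scale covering estimate of the form $\mu(A_j)\le c\,C(n,k)\,\mu(\R^n)$ on the ``bad set'' $A_j$ at scale $j$ only gives a uniform-in-$l$ Markov bound on $N_c(x,l)/l$, which is not summable in $l$ and so cannot be promoted to an almost-sure statement via Borel--Cantelli; passing through the pointwise telescoping logarithm replaces infinitely many expected-value estimates by one chain-rule identity and avoids this obstruction.
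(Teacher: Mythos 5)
Your proof is correct and takes essentially the same route as the paper: both arguments telescope the ratios $\mu(B(x,\gamma k^{-j}))/\mu(B(x,\gamma k^{-j+1}))$ over scales $j=1,\dots,l$ to show that too many ``bad'' scales would force $\mu(B(x,\gamma k^{-l}))$ to decay faster than $r^n$, which contradicts the $\mu$-a.e.\ bound $\limsup_{r\downarrow 0}\log\mu(B(x,r))/\log r\le n$ (the paper cites Falconer, Proposition 10.2, for this). The only cosmetic differences are that the paper argues by contradiction with the multiplicative form $\mu(B(x,\gamma k^{-l}))<c^{(1-p)l}\mu(B(x,\gamma))$ and takes $c=k^{-2n/(1-p)}$ to land at $2n>n$, whereas you phrase the same estimate additively in logarithms, argue directly, and take the sharper $c=k^{-n/(1-p)}$.
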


\begin{proof}
  Let $c = k^{-2n/(1-p)}$, fix a measure $\mu$ on $\R^n$ and $\gamma > 0$,
  and denote $N(x,l) =
  \#\bigl\{ j \in \{ 1,\ldots,l \} : \mu\bigl( B(x,\gamma k^{-j})
  \bigr) \ge c\mu\bigl( B(x,\gamma k^{-j+1}) \bigr) \bigr\}$ for $x
  \in \R^n$ and $l \in \N$.
  Suppose that $x\in\R^n$ is a point at which
  \begin{equation*}
    \liminf_{l \to \infty} \tfrac{1}{l} N(x,l) < p.
  \end{equation*}
  Then there are
  arbitrarily large integers $l$ such that $N(x,l) < lp$. Hence
  \begin{equation*}
    \mu\bigl( B(x,\gamma k^{-l}) \bigr) < c^{(1-p)l} \mu\bigl(
    B(x,\gamma) \bigr)
  \end{equation*}
  for any such $l$ and consequently,
  \begin{align*}
    \limsup_{r \downarrow 0} \frac{\log\mu\bigl( B(x,r) \bigr)}{\log r}
    &\ge \limsup_{l \to \infty} \frac{\log c^{(1-p)l} + \log\mu\bigl(
      B(x,\gamma) \bigr)}{\log \gamma k^{-l}} \\
    &= \limsup_{l \to \infty} \frac{2n \log k^{-l}}{\log\gamma k^{-l}} = 2n>n.
  \end{align*}
  But this is possible only in a set of $\mu$-measure zero, see
  for example \cite[Proposition 10.2]{Falconer1997}. The claim thus follows.
\end{proof}

\section{A general conical density estimate}\label{sec:gen}

Our first result is a conical
density theorem valid for all measures on $\R^n$. This result is
motivated by \cite[Question 4.3]{KaenmakiSuomala2008} asking if
\[\limsup_{r\downarrow 0}\inf_{\theta\in S^{n-1}}\frac{\mu\bigl(B(x,r)\setminus
H(x,\theta,\alpha)\bigr)}{h(2r)}\geq c(n,\alpha)\limsup_{r\downarrow 0}
\frac{\mu\bigl(B(x,r)\bigr)}{h(2r)}\]
holds $\mu$-almost everywhere for all measures $\mu$ on $\R^n$ and all
doubling gauge functions $h$. We shall
formulate our result for densities having $\mu\bigl(B(x,r)\bigr)$ in the
denominator rather than $h(2r)$ because we believe that these
densities are more natural in this general setting. The original
question may also be answered in the positive by a slight modification of the
proof below.

\begin{theorem}\label{thm:all}
  If $n \in \N$ and $0 < \alpha \le 1$, then there exists a constant $c = c(n,\alpha)>0$ so that for every measure $\mu$ on $\R^n$ we have
  \begin{equation*}
    \limsup_{r \downarrow 0} \inf_{\theta \in S^{n-1}} \frac{\mu\bigl(
      B(x,r) \setminus H(x,\theta,\alpha) \bigr)}{\mu\bigl( B(x,r)
      \bigr)} > c
  \end{equation*}
  for $\mu$-almost every $x \in \R^n$.
\end{theorem}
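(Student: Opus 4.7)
The approach is by contradiction. Suppose, for some small $c>0$ to be chosen, the set
\[
E=\Bigl\{x\in\R^n:\limsup_{r\downarrow 0}\inf_{\theta\in S^{n-1}}\mu(B(x,r)\setminus H(x,\theta,\alpha))/\mu(B(x,r))\le c\Bigr\}
\]
has $\mu(E)>0$. Then for every $x\in E$ and every sufficiently small $r$, some $\theta_{x,r}\in S^{n-1}$ satisfies $\mu(B(x,r)\cap H(x,\theta_{x,r},\alpha))\ge (1-c)\mu(B(x,r))$. The geometric input is the observation that, for $\alpha>0$, the cone-ball intersection fits inside a strictly smaller ball off-centre from $x$: a direct smallest-enclosing-ball calculation with the centre on the cone's axis yields $\lambda=\lambda(\alpha)\in(0,1)$ and $y=y(x,\theta,r)$ with $|y-x|<r$ such that $H(x,\theta,\alpha)\cap B(x,r)\subset B(y,\lambda r)$; for instance, for $\alpha\le 1/\sqrt{2}$ one may take $y=x+\alpha r\theta$ and $\lambda=\sqrt{1-\alpha^2}$, while for $\alpha>1/\sqrt{2}$ one takes $y=x+(r/(2\alpha))\theta$ and $\lambda=1/(2\alpha)$.

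The analytic input is Lemma \ref{thm:doubling_scales}. Applied with a suitably large integer $k$ and $p\in(0,1)$ close to $1$, it produces a constant $c_0=c_0(n,k,p)>0$ and a $\mu$-full-measure set on which at least a fraction $p$ of the scales $r=k^{-j}$ satisfies the doubling inequality $\mu(B(x,r))\ge c_0\mu(B(x,kr))$. Fix a $\mu$-density point $x_0\in E$ at which both conclusions hold. The plan is to iterate: at step $i$, take a bad doubling scale $r_i$ at the current centre $x_i\in E$, obtain from the geometric lemma a sub-ball $B_i=B(y_i,\lambda r_i)$ of $\mu$-mass at least $(1-c)\mu(B(x_i,r_i))$, and, using that $x_0$ is a density point of $E$, pick $x_{i+1}\in E\cap B_i$ to continue at the next scale $r_{i+1}$ proportional to $\lambda r_i$. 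If the bookkeeping yields $\mu(B(x_i,r_i))\gtrsim (1-c)^i\mu(B(x_0,r_0))$ with $r_i$ of order $\lambda^i r_0$, then for $c$ small enough that $(1-c)>\lambda^n$ one obtains $\limsup_{r\downarrow 0}\log\mu(B(x_\infty,r))/\log r>n$ at the limit point $x_\infty=\lim_i x_i$, contradicting the standard bound from \cite[Proposition 10.2]{Falconer1997} already invoked in the proof of Lemma \ref{thm:doubling_scales}.

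The main obstacle is executing the iteration cleanly. The shifted centre $y_i$ need not lie in $E$, so one replaces it by some $x_{i+1}\in E\cap B_i$, costing a controlled factor in the subsequent radius comparison; this is absorbed by accepting a slightly looser contraction and allowing $r_{i+1}$ to be only of order (rather than exactly) $\lambda r_i$. One must also ensure that $x_{i+1}$ inherits a doubling scale near $r_{i+1}$; this is secured by taking $p$ close enough to $1$ in Lemma \ref{thm:doubling_scales} so that the set of points enjoying doubling has $\mu$-full measure inside $B_i$, combined with the density-point property of $x_0$. When $\alpha$ is small, $\lambda$ is close to $1$, the chain of inequalities becomes marginal, and $c$ must be chosen quantitatively much smaller than $1-\lambda^n$ for the argument to close; this is the source of the dimensional dependence of the constant $c(n,\alpha)$.
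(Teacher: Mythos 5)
Your geometric observation that $H(x,\theta,\alpha)\cap B(x,r)$ fits inside a ball $B(y,\lambda r)$ with $\lambda=\lambda(\alpha)<1$ is correct (I checked both regimes of $\alpha$), and your use of Lemma \ref{thm:doubling_scales} is natural. The approach is genuinely different from the paper's, but I do not see how to make the iteration close, and the crux of the problem is the direction of the final inequality.

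Your chain produces a \emph{lower} bound of the form $\mu\bigl(B(x_\infty,\rho_i)\bigr)\gtrsim(1-c)^i\mu\bigl(B(x_0,r_0)\bigr)$ along a sequence $\rho_i$ comparable to $\lambda^i r_0$. Taking logarithms and dividing by $\log\rho_i<0$, a lower bound on $\mu$ gives an \emph{upper} bound on $\log\mu(B(x_\infty,\rho_i))/\log\rho_i$; the limit is at most $\log(1-c)/\log\lambda$, which under your hypothesis $(1-c)>\lambda^n$ is strictly \emph{less} than $n$. So the iteration bounds the lower local dimension of $\mu$ at $x_\infty$ from above; it says nothing about the upper local dimension being large. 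This is not in tension with \cite[Proposition 10.2]{Falconer1997}: that result excludes $\limsup_{r\downarrow 0}\log\mu(B(x,r))/\log r>n$ on a set of positive $\mu$-measure, which would require an upper bound $\mu(B(x,r))\lesssim r^{n+\eps}$. You have neither the upper bound nor the positive-measure set — only a lower bound at a single limit point, and small lower local dimension at a single point is completely unremarkable (consider, for instance, a point near an atom). The claimed contradiction does not materialize.

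There is also a secondary bookkeeping issue you partially flag but do not resolve. After re-centring to $x_{i+1}\in E\cap B(y_i,\lambda r_i)$, the smallest ball centred at $x_{i+1}$ that still covers $B(y_i,\lambda r_i)$ has radius $2\lambda r_i$, not $\lambda r_i$. For small $\alpha$ one has $\lambda=\sqrt{1-\alpha^2}$ close to $1$, so $2\lambda\geq 1$ and the radii need not shrink at all, while the cumulative drift $\sum_{j\geq i}|x_{j+1}-x_j|$ is of order $r_i/(1-2\lambda)$ and dwarfs $r_i$. Shrinking instead to the next doubling scale of $x_{i+1}$ below $2\lambda r_i$ introduces further multiplicative losses that you would have to control uniformly over the iteration; this is not addressed and seems to me to be a genuine second gap.

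For comparison, the paper avoids both problems by never iterating: it works at a single tripling scale $r_1$ around a density point $z$ of the bad set $A$, reduces to finitely many directions $\theta_1,\dots,\theta_K$, selects $A_j$ of largest measure, and then — this is the key idea you are missing — takes the point $y$ of $\overline{A_j\cap B(z,r_1)}$ \emph{maximizing} $x\cdot\theta_j$. Extremality forces $A\cap B(z,r_1)$ to be covered by a tiny ball $D_1$ about $y$ (small by non-atomicity), a one-sided cone $D_2$ from a nearby $y'\in A_j$ (small because $y'\in A_j$ and by the tripling bound), and a remainder $D_3$ disjoint from $A_j$ (controlled by pigeonhole), yielding $\mu(A\cap B(z,r_1))\leq\tfrac13\mu(B(z,r_1))$ in contradiction with the density-point inequality. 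If you want to keep a contradiction-style argument, I would look for a single-scale mechanism of this kind rather than a multi-scale chain.
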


\begin{proof}
It is enough to consider non-atomic measures since
\[\limsup_{r\downarrow 0}\inf_{\theta\in
  S^{n-1}}\frac{\mu\bigl(B(x,r)\setminus
  H(x,\theta,\alpha)\bigr)}{\mu\bigl(B(x,r)\bigr)}=1\]
if $\mu(\{x\})>0$.

Because we want to use only a finite set of directions, we cover
the set $S^{n-1}$ with cones $\{ H(0, \theta_i, \beta) \}_{i=1}^K$,
where $\beta = \cos\bigl( \arccos(\alpha/2)-\arccos(\alpha) \bigr)$
and $K=K(n,\alpha) \in \N$.  For all $\theta\in S^{n-1}$ there is
$i\in\{1,\ldots,K\}$ so that $H(x,\theta,\alpha)\subset H(x,\theta_i,\alpha/2)$
for all $x\in\R^n$. Given this it is enough to show that for all
measures $\mu$ on $\R^n$ we have
\begin{equation}\label{uusivaite}
    \limsup_{r \downarrow 0} \min_{i\in\{1,\ldots,K\}} \frac{\mu\bigl(
      B(x,r) \setminus H(x,\theta_i,\alpha/2) \bigr)}{\mu\bigl( B(x,r)
      \bigr)} > c=c(\alpha,n)>0
\end{equation}
for $\mu$-almost all $x \in \R^n$.

To prove \eqref{uusivaite} we first apply Lemma
\ref{thm:doubling_scales} to find a constant $c'
< \infty$ depending only on $n$ (choosing $c'=3^{2n}$ will do) so that for
all measures $\mu$ and for every radius $R > 0$ we have the following: For $\mu$-almost every $x \in \R^n$ there is a scale $r < R$ so that
\begin{equation*}
  \mu\bigl( B(x,3r) \bigr) \le c'\mu\bigl( B(x,r) \bigr).
\end{equation*}
We will prove that
\eqref{uusivaite} holds with $c = c(n,\alpha)=
(9c'K)^{-1}$. Assume on the contrary that this is not the case. Then
we find a non-atomic measure $\mu$ and $r_0 >
0$ so that the set
  \begin{equation*}
    A := \{ x \in \R^n : \min_{i\in\{1,\ldots,K\}}
    \frac{\mu\bigl( B(x,r) \setminus H(x, \theta_i, \alpha/2)
      \bigr)}{\mu\bigl( B(x,r) \bigr)} < 2c \text{ for every } 0 < r
    \le r_0 \}
  \end{equation*}
  has positive $\mu$-measure. Now $A$ is seen to be a Borel set by
  standard methods and thus $\mu$-almost all $z\in A$ are
  $\mu$-density points of $A$, see \cite[Corollary
  2.14]{Mattila1995}. Thus we may find a point $z \in A$ and a radius
  $0 < r_1 \le r_0/2$ so that
  \begin{equation} \label{eq:densitypoint}
    \mu\bigl( A \cap B(z,r_1) \bigr) \ge \tfrac12\mu\bigl( B(z,r_1) \bigr)
  \end{equation}
  and
  \begin{equation} \label{eq:tripling}
    \mu\bigl( B(z,3r_1) \bigr) \le c'\mu\bigl( B(z,r_1) \bigr).
  \end{equation}
  Now $A\subset \bigcup_{i=1}^{K} A_i$, where
  \begin{equation*}
    A_i := \bigl\{ x \in A\,:\,\mu\bigl( B(x,2r_1)\setminus
    H(x,\theta_i,\alpha/2)\bigr) < 2c \mu\bigl( B(x,2r_1)
    \bigr)\bigr\},
  \end{equation*}
  and thus we may find $j \in \{ 1,\ldots,K \}$ so that
  \begin{equation} \label{eq:bestdirection}
    \mu\bigl( A_j \cap B(z,r_1) \bigr) \ge K^{-1}\mu \bigl( A
    \cap B(z,r_1) \bigr).
  \end{equation}
  Next take a point $y$ from the closure of $A_j\cap B(z,r_1)$ so that it
  maximises the inner product $x\cdot\theta_j$ in the closure of $A_j\cap B(z,r_1)$.
Since the measure $\mu$ is non-atomic, there is a small radius $r_2 <
r_1$ so that
  \begin{equation} \label{eq:rightconepoint}
    \mu\bigl( B(y,r_2) \bigr) < c'c \mu\bigl( B(z,r_1) \bigr).
  \end{equation}
  Now choose any point $y' \in  A_j\cap B(z,r_1) \cap B(y,\alpha
  r_2/3)$ and cover the set $A \cap B(z,r_1)$ with sets $D_1$,
  $D_2$, and $D_3$ defined by $D_1 = B(y,r_2)$,
    $D_2 = B(y',2r_1) \setminus H(y',\theta_j,\alpha/2)$, and
    $D_3 = \bigl( A \cap B(z,r_1) \bigr) \setminus (D_1 \cup D_2)$,
  see Figure \ref{fig:Deet}.
  \begin{figure}
    \psfrag{x}{$y'$}
    \psfrag{y}{$y$}
    \psfrag{z}{$z$}
    \psfrag{t}{$\theta_j$}
    \psfrag{r1}{$r_1$}
    \psfrag{r2}{$3r_1$}
    \begin{center}
      \includegraphics[width=0.4\textwidth]{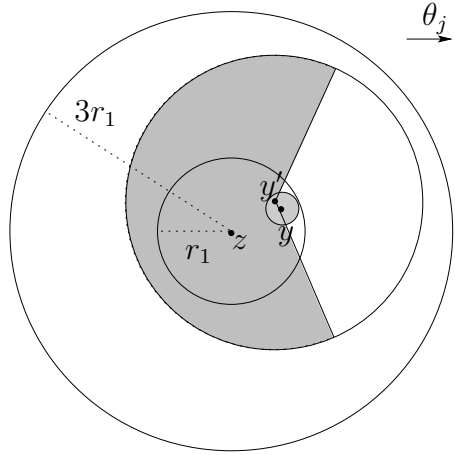}
    \end{center}
    \caption{The covering of the set $A \cap B(z,r_1)$ in the proof of
      Theorem \ref{thm:all}. The smallest ball is the set $D_1$ and
      the darkened sector is the set $D_2$. The rest of the set is
      called $D_3$.}
    \label{fig:Deet}
  \end{figure}
  Observe that $D_3 \cap A_j = \emptyset$ and so
  \eqref{eq:bestdirection} implies
  \begin{equation*}
    \mu(D_3) \le (1-K^{-1}) \mu\bigl( A \cap B(z,r_1) \bigr).
  \end{equation*}
  Moreover, the inequality \eqref{eq:rightconepoint} reads
  \begin{equation*}
    \mu(D_1) < c'c \mu\bigl( B(z,r_1) \bigr)
  \end{equation*}
  and with \eqref{eq:tripling} and the fact that $y'\in A_j$, we are
  able to conclude that
  \begin{equation*}
    \mu(D_2) < 2c \mu\bigl( B(y',2r_1) \bigr) \le 2c \mu\bigl( B(z,
    3r_1) \bigr) \le 2c'c \mu\bigl( B(z,r_1) \bigr).
  \end{equation*}
  Putting these three estimates together yields
  \begin{equation*}
    \mu\bigl( A \cap B(z,r_1) \bigr) \le 3 c'c \mu\bigl( B(z,r_1)
    \bigr) + (1-K^{-1}) \mu\bigl( A \cap B(z,r_1) \bigr)
  \end{equation*}
  from which we get
  \begin{equation*}
    \mu\bigl( A \cap B(z,r_1) \bigr) \le 3K c'c \mu\bigl( B(z, r_1)
    \bigr) = \tfrac{1}{3} \mu\bigl( B(z,r_1) \bigr).
  \end{equation*}
  This contradicts \eqref{eq:densitypoint} and finishes the proof.
\end{proof}

\section{Measures with positive Hausdorff dimension}
\label{sec:conical}

Suppose that $\HH_h$ is a Hausdorff
measure constructed using a non-decreasing gauge function
$h\colon(0,r_0)\rightarrow(0,\infty)$ and $\mu$ is its restriction to
some Borel set with finite $\HH_h$ measure. There are many works
(e.g.\ \cite{Salli1985}, \cite{Mattila1988},
\cite{KaenmakiSuomala2004}) that give information on the amount of
$\mu$ on small cones around $(n-m)$-planes $V\in G(n,n-m)$ when $h$
satisfies suitable assumptions. These results apply when $\HH^m$ is purely
singular with respect to $\HH_h$. In \cite{KaenmakiSuomala2008},
similar results are obtained also for many packing type measures. In
this section, we consider general measures with $\dimh(\mu)>m$ in the
same spirit by proving the following result.

\begin{theorem} \label{thm:dimh_positive}
  If $n \in \N$, $m \in \{ 0,\ldots,n-1 \}$, $s>m$, and $0<\alpha\le 1$, then there exists a constant $c=c(n,m,s,\alpha)>0$ so that for every measure $\mu$ on $\R^n$ with $\dimh(\mu) \ge s$ we have
  \begin{equation}\label{eq:claim}
    \limsup_{r \downarrow 0} \inf_{\atop{\theta \in S^{n-1}}{V \in
        G(n,n-m)}} \frac{\mu\bigl( X(x,r,V,\alpha) \setminus
      H(x,\theta,\alpha) \bigr)}{\mu\bigl( B(x,r) \bigr)} > c
  \end{equation}
  for $\mu$-almost every $x \in \R^n$.
\end{theorem}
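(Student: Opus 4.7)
\emph{Proof plan.} The strategy is to argue by contradiction and invoke Theorem \ref{thm:jarv}. Suppose the conclusion fails; then for some $c>0$ to be fixed later, the set
\[
A = \Bigl\{x\in\R^n : \limsup_{r\downarrow 0}\inf_{\theta,V}\frac{\mu\bigl(X(x,r,V,\alpha)\setminus H(x,\theta,\alpha)\bigr)}{\mu\bigl(B(x,r)\bigr)}\leq c\Bigr\}
\]
has positive $\mu$-measure. Fix finite $\eps$-nets of $S^{n-1}$ and of $G(n,n-m)$. For each $x\in A$ there is a positive upper density of dyadic scales $r_j=k^{-j}$ at which the infimum is $\leq c$, and the witnessing pair $(V_{r_j},\theta_{r_j})$ lies in some product cell of the two nets. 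A double pigeonhole---first over which cell is used at a positive density of scales for each $x$, then over which cell witnesses this for a positive-$\mu$ set of $x$---reduces the problem to a positive-measure Borel subset $A'\subset A$ together with a single reference pair $(V^*,\theta^*)$. Using the monotonicity of $X(\cdot,\cdot,V,\alpha)$ and of $H(\cdot,\theta,\alpha)^c$ in $\alpha$ to absorb the net-approximation, for $\mu|_{A'}$-a.e.\ $x$ a positive-density collection of scales $r_j=k^{-j}$ satisfies
\[
\mu\bigl(X(x,r_j,V^*,\alpha/2)\setminus H(x,\theta^*,\alpha/2)\bigr)\leq c\,\mu\bigl(B(x,r_j)\bigr).
\]
Lemma \ref{thm:doubling_scales} further restricts us to scales which are doubling, so that $\mu(B(x,r_j))$ is comparable to $\mu(Q)$ for the $k$-adic cube $Q\in\QQ_k^j$ containing $x$.

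The combinatorial heart is a \emph{geometric counting lemma}: for $k$ large enough there exists $\beta=\beta(n,m,\alpha)>0$ such that for every $Q\in\QQ_k^l$ and every $x\in Q$, at least $\beta k^n$ of the $k^n$ sub-cubes of $Q$ in $\QQ_k^{l+1}$ lie entirely inside $X(x,Ck^{-l},V^*,\alpha/2)\setminus H(x,\theta^*,\alpha/2)$ for an appropriate $C=C(n)>1$. Combined with the failure bound and doubling, these $\beta k^n$ sub-cubes jointly carry mass at most $cc'\mu(Q)$, so the $\lceil\beta k^n/2\rceil$-th smallest sub-cube of $Q$ has mass at most $(2cc'/\beta k^n)\mu(Q)$. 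Averaging across the positive density of good levels (and using the trivial bound $\mu(Q_{\iii,i})\leq\mu(Q_\iii)$ on the remaining ones) yields a bound $\hom_k^i(\mu|_{A'})\leq k^n\eta$ with $i=\lceil\beta k^n/2\rceil$ and $\eta$ arbitrarily small as the density of good scales approaches $1$ and $c\downarrow 0$.

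Plugging $i$ and $\eta$ into Theorem \ref{thm:jarv} yields an upper bound for $\dimh(\mu|_{A'})\geq\dimh(\mu)\geq s$, and the contradiction follows if this upper bound drops below $s$. Achieving this for \emph{every} $s>m$ is the main obstacle: a single pair $(V^*,\theta^*)$ produces $\beta$ only of order $\alpha^m$, so the Theorem \ref{thm:jarv} estimate stays near $n$ rather than near $m$. I would close the gap by reinforcing the counting lemma with a finite family $\{(V^*_\ell,\theta^*_\ell)\}$ of reference pairs chosen so that their associated good cones collectively exhaust $Q$ outside an exceptional tube containing only $O(k^m)$ sub-cubes; the Theorem \ref{thm:jarv} estimate then approaches $m$ as $k\to\infty$, and a suitable choice of $c=c(n,m,s,\alpha)$ forces it strictly below $s$.
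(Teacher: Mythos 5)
The route you sketch is genuinely different from the paper's, and it has a gap that your own closing paragraph correctly flags but does not close. Your pigeonhole-to-a-single-pair step is sound, and your geometric counting lemma is correct: for a fixed $(V^*,\theta^*)$, the cone $X(x,Ck^{-l},V^*,\alpha/2)\setminus H(x,\theta^*,\alpha/2)$ captures on the order of $\beta k^n$ sub-cubes with $\beta\asymp\alpha^m$, independently of $k$. The trouble is exactly where you put it: feeding $i\approx\beta k^n/2$ into Theorem \ref{thm:jarv}, with $\eta\downarrow 0$, gives an upper bound for $\dimh$ of $\log(k^n-i)/\log k = n+\log(1-\beta/2)/\log k$, which tends to $n$ as $k\to\infty$; for a small fixed $\alpha$ (hence small $\beta$) and any target $s$ well below $n$, this never drops below $s$, no matter how small you take $c$. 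To reach the range $\approx m$ one needs $k^n-i = O(k^m)$, i.e.\ $i = k^n - O(k^m)$, which requires certifying that \emph{all but polynomially many} (in $k^m$) sub-cubes are light, far beyond what a single cone of any aperture can supply.

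Your proposed repair---a finite family $\{(V^*_\ell,\theta^*_\ell)\}$ whose cones ``collectively exhaust $Q$ outside a tube of $O(k^m)$ sub-cubes''---does not salvage this, because the failure hypothesis gives you lightness of only the \emph{one} witnessing cone at each scale. The union of the candidate cones over all reference pairs is not itself light: the remaining pairs' cones carry mass you have no control over. So the exceptional set that you can actually discard is still $(1-\beta)k^n$ sub-cubes, not $O(k^m)$.

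The paper avoids this obstruction by reversing the role of Theorem \ref{thm:jarv}. Lemma \ref{thm:homolemma} applies J\"arvenp\"a\"a--J\"arvenp\"a\"a in the contrapositive: since $\dimh(\mu)\geq s>m$, at a positive density of $k$-adic scales the $(k^n-Mk^m)$-th smallest sub-cube already carries a definite fraction of the mass, so there are $\sim Mk^m$ \emph{heavy} sub-cubes, with $M=M(n,m,\alpha)$ and $k=k(n,m,s,\alpha)$ chosen so that $\log(Mk^m)/\log k<s$. These heavy cubes (thinned to a $2t$-separated family of balls via Lemma \ref{thm:doubling_scales}) feed Proposition \ref{thm:disjoint_balls}: for \emph{any} $V\in G(n,n-m)$, the projection of $B(x,r)$ onto $V^\perp$ is covered by $\sim k^m$ sub-cube-sized balls, so a pigeonhole in $V^\perp$ forces $q$ of the heavy balls to be aligned along a translate of $V$. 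Lemma \ref{thm:erdos} (Erd\H{o}s--F\"uredi) together with the separation Lemma \ref{thm:etamato} then produces a point $x_0$ in one of these balls from which two of the others sit in $X^+(x_0,\pm\zeta,\alpha/2)\subset X(x_0,\cdot,V,\alpha)$ on opposite sides, contradicting the assumed lightness of every cone around $x_0$. The key structural difference is that the paper never fixes the pair $(V,\theta)$ at the homogeneity stage; quantification over $V$ is handled later by the $m$-dimensional projection pigeonhole, which is what lets $i$ be taken as close to $k^n$ as $i=k^n-Mk^m$. Without some replacement for that step, I do not see how your plan can reach every $s>m$.
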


We first introduce a couple of geometric lemmas. The first one is
proved by  Erd\H{o}s and F\"uredi in \cite{ErdosFuredi1983} with the
correct asymptotics for $q(n,\alpha)$ as $\alpha \to 0$. See
also \cite[Lemma 2.1]{KaenmakiSuomala2004}.

\begin{lemma} \label{thm:erdos}
  For each $0 < \alpha \le 1$ there is $q = q(n,\alpha) \in \N$
  such that in any set of $q$
  points in $\R^n$, there are always three points $x_0$, $x_1$, and
  $x_2$ for which $x_1 \in X^+(x_0,\theta,\alpha)$ and $x_2 \in
  X^+(x_0,-\theta,\alpha)$ for some $\theta \in S^{n-1}$.
\end{lemma}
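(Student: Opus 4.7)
The plan is to recast the cone condition as an equivalent angular condition and then invoke the classical large-angle theorem of Erd\H{o}s and F\"uredi.

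First, I would unwind the definition of the cones. Because $X^+(x, \theta, \alpha) = H(x, \theta, \sqrt{1 - \alpha^2})$, the membership $x_1 \in X^+(x_0, \theta, \alpha)$ says precisely that the unit vector $(x_1 - x_0)/|x_1 - x_0|$ lies within angular distance $\arcsin \alpha$ of $\theta$ on $S^{n-1}$; likewise $x_2 \in X^+(x_0, -\theta, \alpha)$ places $(x_2 - x_0)/|x_2 - x_0|$ within angular distance $\arcsin \alpha$ of $-\theta$. The existence of a single $\theta \in S^{n-1}$ making both memberships hold is therefore equivalent to the single inequality $\angle x_1 x_0 x_2 > \pi - 2 \arcsin \alpha$ at the vertex $x_0$. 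Hence the lemma reduces to the following: for every $n \in \N$ and every $\delta \in (0, \pi)$ there exists $q = q(n, \delta) \in \N$ such that any $q$-point set in $\R^n$ contains three points forming an angle greater than $\pi - \delta$ at one of them. Applying this with $\delta = 2 \arcsin \alpha$ yields the desired constant.

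The reduced statement is precisely the Erd\H{o}s-F\"uredi theorem, which I would then cite (or reprove). For the sake of the plan, the underlying argument proceeds by contradiction: assuming $E \subset \R^n$ has every angle at most $\pi - \delta$, one picks a diametral pair $p_1, p_2 \in E$ with $|p_1 - p_2| = \diam(E) = d$. Maximality of $d$ forces every other $z \in E$ into $B(p_1, d) \cap B(p_2, d)$, and the angle bound $\angle p_1 z p_2 \le \pi - \delta$ excludes $z$ from a lens region around the segment $(p_1, p_2)$ whose thickness depends only on $n$ and $\delta$. The residual region is covered by $M = M(n, \delta)$ balls each of diameter at most $\rho d$ for some $\rho = \rho(n, \delta) < 1$, and by pigeonhole one of these balls captures at least $(|E| - 2)/M$ points of $E$. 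Since the angle hypothesis is hereditary under taking subsets, one iterates the reduction, and combining the iteration with the observation that a set all of whose angles stay away from $\pi$ must lie in convex position (otherwise a convex combination relation among three points would produce two opposing direction vectors at the inner point) one obtains a uniform combinatorial bound $|E| \le q(n, \delta)$.

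The main obstacle is not the angle reformulation, which is a routine unwinding of the definition of $X^+$, but making the iteration close up quantitatively with the correct asymptotic in $\delta$, equivalently in $\alpha$, as $\alpha \to 0$. This sharper bookkeeping is the substantive content of \cite{ErdosFuredi1983}, and I would invoke their theorem as a black box once the cone-to-angle translation has been made.
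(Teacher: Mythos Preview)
Your proposal is correct and matches the paper's approach: the paper does not give an independent proof of this lemma but simply cites \cite{ErdosFuredi1983} (and \cite[Lemma~2.1]{KaenmakiSuomala2004}) for it, so your explicit cone-to-angle translation followed by invoking the Erd\H{o}s--F\"uredi theorem is exactly the intended argument, just with the reformulation spelled out.
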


We would like to apply the previous lemma for balls instead of just
single points. For this, we will need the following simple lemma.

\begin{lemma} \label{thm:etamato}
  For each $0 < \alpha \le 1$ there is $t = t(\alpha) \ge 1$ such that if $x_0,y_0 \in \R^n$ and $r_x,r_y > 0$
  are such that $B(x_0,tr_x) \cap B(y_0,tr_y) = \emptyset$ and $y_0
  \in X^+(x_0,\theta,\alpha/t)$ for some $\theta \in S^{n-1}$, then
  \begin{equation*}
    B(y_0,r_y) \subset X^+(x,\theta,\alpha)
  \end{equation*}
  for all $x \in B(x_0,r_x)$.
\end{lemma}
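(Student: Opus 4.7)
The plan is to choose the constant $t=t(\alpha)$ large enough that, for every $x \in B(x_0,r_x)$ and every $z \in B(y_0,r_y)$, the vectors $z-x$ and $y_0-x_0$ differ only by a small perturbation and therefore point in almost the same direction. Concretely, I would fix such $x$ and $z$ and write
\[
z - x \;=\; (y_0 - x_0) + w, \qquad w := (z - y_0) + (x_0 - x),
\]
so that $|w| \le r_x + r_y$. The disjointness hypothesis $B(x_0,tr_x) \cap B(y_0,tr_y) = \emptyset$ yields $|y_0 - x_0| > t(r_x + r_y)$, hence $|w| < |y_0 - x_0|/t$, and so $w$ is a negligible perturbation of $y_0-x_0$ once $t$ is large.

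From here the cleanest route is to estimate the inner product directly rather than pass through angles. The hypothesis $y_0 \in X^+(x_0,\theta,\alpha/t) = H(x_0,\theta,(1-\alpha^2/t^2)^{1/2})$ reads
\[
(y_0 - x_0) \cdot \theta \;>\; (1 - \alpha^2/t^2)^{1/2}\,|y_0 - x_0|,
\]
so
\[
(z - x) \cdot \theta \;\ge\; (1 - \alpha^2/t^2)^{1/2}\,|y_0 - x_0| - |w|, \qquad |z - x| \;\le\; |y_0 - x_0| + |w|.
\]
Combining these with $|w| < |y_0 - x_0|/t$ and dividing through by $|y_0-x_0|$, the desired inclusion $z \in X^+(x,\theta,\alpha)$, i.e.\ $(z-x)\cdot\theta > (1-\alpha^2)^{1/2}|z-x|$, reduces to the scalar inequality
\[
(1 - \alpha^2/t^2)^{1/2} - 1/t \;>\; (1 - \alpha^2)^{1/2}\bigl(1 + 1/t\bigr).
\]
As $t \to \infty$ the left side tends to $1$ while the right tends to $(1-\alpha^2)^{1/2} < 1$, so the inequality holds for all $t \ge t(\alpha)$ sufficiently large.

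No genuine obstacle arises: the lemma is essentially a quantitative continuity statement saying that a cone based at $x_0$ with axis $\theta$, narrowed by the factor $1/t$, contains $B(y_0,r_y)$ as seen from any nearby viewpoint $x$. The only care needed is to balance the angular tightening $\alpha/t$ against the allowed spatial displacement $r_x+r_y$, which is exactly what the choice of $t$ depending on $\alpha$ absorbs.
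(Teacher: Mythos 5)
Your proof is correct and follows essentially the same route as the paper's: decompose $z-x$ as $(y_0-x_0)$ plus a perturbation $w$ of size at most $r_x+r_y < |y_0-x_0|/t$, then bound the dot product $(z-x)\cdot\theta$ from below and $|z-x|$ from above to reduce the claim to a one-variable inequality in $t$ that clearly holds for large $t$. The paper packages the same estimate with an explicit auxiliary parameter $\eps=\bigl(1-(1-\alpha^2)^{1/2}\bigr)/2$ and spells out three concrete conditions on $t$, but the underlying computation is the same.
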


\begin{proof}
  Fix $y \in B(y_0,r_y)$ and $x \in B(x_0,r_x)$. Our aim is to find $t
  \ge 1$ depending only on $\alpha$ so that under the assumptions of
  the lemma we have
  \begin{equation*}
    (y-x) \cdot \theta > (1-\alpha^2)^{1/2}|y-x|.
  \end{equation*}
  Let $\eps = \bigl( 1-(1-\alpha^2)^{1/2} \bigr)/2$ and choose $t \ge
  1$ so large that $(1-(\alpha/t)^2)^{1/2} \ge 1-\eps$, $(1-\eps)t - 1
  > 0$, and $(1-\eps)/(1+1/t) - 1/(t+1) >
  (1-\alpha^2)^{1/2}$. According to our assumptions, we have
  \begin{align}
    |y_0-x_0| &\ge t(r_y+r_x), \label{eq:angle_2} \\
    (y_0-x_0) \cdot \theta &\ge (1-\eps)|y_0-x_0|. \label{eq:angle_1}
  \end{align}
  Also, we clearly have $(y-x) \cdot \theta \ge (y_0-x_0) \cdot \theta
  - (r_y+r_x) > 0$ and $|y-x| \le |y_0-x_0| + r_y+r_x$. Hence
  \begin{equation} \label{eq:angle_3}
    \frac{(y-x) \cdot \theta}{|y-x|} \ge
    \frac{(y_0-x_0) \cdot \theta}{|y_0-x_0| + r_y+r_x} -
    \frac{r_y+r_x}{|y_0-x_0| + r_y+r_x}.
  \end{equation}
  Now \eqref{eq:angle_2} yields
  \begin{equation*}
    \frac{r_y+r_x}{|y_0-x_0| + r_y+r_x} \le \frac{1}{t+1}
  \end{equation*}
  and by using \eqref{eq:angle_1} and \eqref{eq:angle_2}, we get
  \begin{equation*}
    \frac{|y_0-x_0| + r_y+r_x}{(y_0-x_0) \cdot \theta} \le
    \frac{1}{1-\eps} + \frac{1}{(1-\eps)t}.
  \end{equation*}
  The proof is finished by combining these estimates with
  \eqref{eq:angle_3} and the choice of $t$.
\end{proof}

The following somewhat technical proposition reduces the proof of Theorem
\ref{thm:dimh_positive} to finding a suitable amount of roughly
uniformly distributed balls inside $B(x,r)$ all having quite large
measure. If this can be done at arbitrarily small scales around
typical points, then Theorem \ref{thm:dimh_positive} follows. Below,
we shall denote by $\#\mathcal{B}$ the cardinality of a collection
$\mathcal{B}$.

\begin{remark} \label{rem:grass}
  Observe that $G=G(n,n-m)$ endowed with the metric $d(V,W) = \sup_{x \in V \cap S^{n-1}} \dist(x,W)$ is a compact metric space and
  \[\bigcup_{d(W,V)<\alpha}\{x\,:\,x\in W\}=X(0,V,\alpha)\]
  for all $V\in G$ and $0<\alpha<1$. See \cite[Lemma 2.2]{Salli1985}.
  Using the compactness, we may thus
  choose $K=K(n,m,\alpha)\in\N$ and $(n-m)$-planes $V_1,\ldots,V_K
  \in G$,
  so that for each $V \in G$ there is $j \in \{ 1,\ldots,K \}$ with
  \begin{equation} \label{eq:discrete_planes}
    X(x,V,\alpha) \supset X(x,V_j,\alpha/2)
  \end{equation}
  for all $x \in \R^n$.
\end{remark}

\begin{proposition} \label{thm:disjoint_balls}
Let $m\in\{0,\ldots,n-1\}$, $0<\alpha\leq 1$, $t=t(\alpha/2)$ be the constant of Lemma \ref{thm:etamato}, and $q=q(n-m,\alpha/(2t))$ from Lemma
\ref{thm:erdos}. Moreover, let $K=K(n,m,\alpha)$ be as in Remark \ref{rem:grass} and $c>0$. Suppose that $\mu$ is a
measure on $\R^n$ and that for $\mu$-almost all $x\in\R^n$ we may
find arbitrarily small radii $r>0$ and a collection $\mathcal{B}$ of sub-balls of $B(x,r)$ with the following properties:
  \begin{enumerate}
\item \label{assu1}   The collection $\{2tB : B\in\mathcal{B}\}$ is
  pairwise disjoint.
\item\label{assu2}    $\mu(B) > c\mu\bigl( B(x,3r) \bigr)$  for all
  $B\in\mathcal{B}$.
\item \label{assu3}   If
$\mathcal{B}'\subset\mathcal{B}$ with
$\#\mathcal{B}'\geq\#\mathcal{B}/K$ and $V\in G(n,n-m)$, then there is
a translate of $V$ intersecting at least $q$
  balls from the collection $\mathcal{B'}$.
  \end{enumerate}
Then
  \begin{equation}\label{eq:assu}
    \limsup_{r \downarrow 0} \inf_{\atop{\theta \in S^{n-1}}{V \in
        G(n,n-m)}} \frac{\mu\bigl( X(x,r,V,\alpha) \setminus
      H(x,\theta,\alpha) \bigr)}{\mu\bigl( B(x,r) \bigr)} > c
  \end{equation}
  for $\mu$-almost every $x \in \R^n$.
\end{proposition}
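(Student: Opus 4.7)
The plan is, for the typical $x$, the scale $r$, and the collection $\mathcal{B}$ supplied by the hypotheses, to show that for every pair $(V,\theta) \in G(n,n-m)\times S^{n-1}$ there is at least one ball $B \in \mathcal{B}$ with $B \subset X(x,r,V,\alpha) \setminus H(x,\theta,\alpha)$. Assumption (\ref{assu2}) then gives $\mu(B) > c\,\mu(B(x,3r)) \ge c\,\mu(B(x,r))$, which is already the content of \eqref{eq:assu} at scale $r$.

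To produce such a ball, I would first use Remark \ref{rem:grass} to replace $V$ by one of the finitely many $V_1,\dots,V_K$, say $V_j$, so that $X(x,V_j,\alpha/2) \subset X(x,V,\alpha)$; it then suffices to find $B \subset X(x,V_j,\alpha/2)$ disjoint from $H(x,\theta,\alpha)$. Next, set $\mathcal{B}^{*} = \{B \in \mathcal{B} : B \cap H(x,\theta,\alpha) = \emptyset\}$ and argue (applying (\ref{assu3}) in contrapositive form to $\mathcal{B} \setminus \mathcal{B}^{*}$, which would otherwise be forced to spread across translates of $V_j$ in a way incompatible with lying inside the single cone $H(x,\theta,\alpha)$) that $\#\mathcal{B}^{*} \ge \#\mathcal{B}/K$. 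Then apply (\ref{assu3}) to $\mathcal{B}^{*}$ and $V_j$, producing an affine $(n-m)$-plane $W = v_0 + V_j$ meeting at least $q$ balls from $\mathcal{B}^{*}$.

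The centres $y_i$ of these $q$ balls lie within their respective radii of $W$; by (\ref{assu1}) the $2t$-enlargements are pairwise disjoint, so the in-plane separations $|\proj_{V_j}(y_i-y_k)|$ dominate the $V_j^{\perp}$-perturbations by a factor of order $t$. Projecting the centres onto $V_j$ and invoking Lemma \ref{thm:erdos} in dimension $n-m$ with opening $\alpha/(2t)$ yields three of the balls $B^{0},B^{+},B^{-}$ (centres $y^{0},y^{+},y^{-}$) and a unit vector $\theta_{*}\in V_j$ so that $y^{\pm} \in X^{+}(y^{0},\pm\theta_{*},\alpha/(2t))$, up to a small perpendicular distortion absorbed by the constants. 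Lemma \ref{thm:etamato}, applied with $x_0 = y^{0}$, $r_x = r^{0}$, and opening $\alpha/2$, then upgrades this to $B^{\pm} \subset X^{+}(z,\theta_{*},\alpha/2)$ for every $z \in B^{0}$. A final geometric step, using that $|y^{0}-x|\le r$ is dominated by the separation guaranteed by (\ref{assu1}) and by the choice of the constant $t = t(\alpha/2)$, promotes this containment to $B^{+} \subset X^{+}(x,\theta_{*}',\alpha/2) \subset X(x,V_j,\alpha/2) \subset X(x,V,\alpha)$ for a suitable direction $\theta_{*}' \in V_j$. Since $B^{+}\in\mathcal{B}^{*}$, it is automatically disjoint from $H(x,\theta,\alpha)$, so $B = B^{+}$ is the ball we sought.

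The main obstacle I foresee is twofold. First, the combinatorial claim $\#\mathcal{B}^{*}\ge \#\mathcal{B}/K$ must be extracted from (\ref{assu3}): intuitively, balls lying in the single cone $H(x,\theta,\alpha)$ cannot be spread across translates of $V_j$ in the manner required by (\ref{assu3}), but a clean quantitative version of this will need care. Second, the geometric transfer from a cone based at the internal point $y^{0}\in B(x,r)$ to a cone based at the external point $x$ itself is delicate; the opposite-sign structure of the Erd\H{o}s--F\"uredi triple, together with the factor $2t$ in (\ref{assu1}) and the specific choices $t=t(\alpha/2)$ and $q=q(n-m,\alpha/(2t))$, is precisely what makes this step go through, so the main technical work lies in verifying it.
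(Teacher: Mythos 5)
Your approach tries to exhibit, at the very scale $r$ and the very centre $x$ given by the hypothesis, a single ball $B\in\mathcal{B}$ with $B\subset X(x,r,V,\alpha)\setminus H(x,\theta,\alpha)$ for \emph{every} pair $(V,\theta)$. This is too strong, and the two ``obstacles'' you flag at the end are not just technical but fatal. First, the bound $\#\mathcal{B}^{*}\ge\#\mathcal{B}/K$ has no source in the hypotheses. $H(x,\theta,\alpha)$ is essentially a half-space through $x$; nothing in (\ref{assu1})--(\ref{assu3}) prevents all of $\mathcal{B}$ (which is just a family of sub-balls of $B(x,r)$ with no prescribed location relative to $x$) from lying entirely inside it. Assumption (\ref{assu3}) controls how the balls spread across translates of $(n-m)$-planes, which is perfectly compatible with $\mathcal{B}$ being in one half-space: a translate of $V$ can cut $q$ balls that all lie in $H(x,\theta,\alpha)$. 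Your ``contrapositive'' reading of (\ref{assu3}) does not produce the claimed lower bound. Second, even granting a triple $B^{0},B^{+},B^{-}$ and a direction $\theta_{*}\in V_j$ with $B^{\pm}\subset X^{+}(z,\pm\theta_{*},\alpha/2)$ for all $z\in B^{0}$, the promotion to $B^{+}\subset X^{+}(x,\theta_{*}',\alpha/2)$ with $\theta_{*}'\in V_j$ is false in general: the centres of $B^{\pm}$ are near the affine plane $V_j+v_0$, not near $V_j+x$, so the direction from $x$ to $B^{+}$ may have a large component orthogonal to $V_j$, and $B^{+}$ need not be contained in $X(x,r,V_j,\alpha/2)$ at all.

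The paper's proof avoids both problems by not working at the centre of the ball. It argues by contradiction: if \eqref{eq:assu} fails on a set $F$ of positive measure, take a $\mu$-density point $x_1$ of $F$ and a good scale $r$ with the collection $\mathcal{B}\subset B(x_1,r)$. Because each $B\in\mathcal{B}$ has $\mu(B)>c\,\mu(B(x_1,3r))$ and $x_1$ is a density point, every ball of $\mathcal{B}$ meets $F$; a pigeonhole over $j\in\{1,\dots,K\}$ then gives a sub-collection $\mathcal{B}'$ of size $\ge\#\mathcal{B}/K$ whose balls all meet the ``bad'' set $F_j$ (not the complement of a cone at $x_1$). Applying (\ref{assu3}) to $\mathcal{B}'$, Erd\H{o}s--F\"uredi and Lemma~\ref{thm:etamato}, one extracts $B^0,B^1,B^2$ and \emph{a point $x_0\in F_j\cap B^0$} with $B^1\subset X^{+}(x_0,\theta,\alpha/2)$ and $B^2\subset X^{+}(x_0,-\theta,\alpha/2)$; the contradiction is then read off at the vertex $x_0$ and the scale $2r$, where $\mu(B^i)>c\,\mu(B(x_0,2r))$. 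The flexibility to place the cone vertex at a nearby point $x_0$ inside one of the balls --- rather than at $x_1$ --- is precisely what your direct approach lacks, and it is what makes both of the steps you struggle with go through.
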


\begin{proof}
  Let $\mu$ be a measure satisfying the assumptions of the proposition and suppose that $(n-m)$-planes $V_1,\ldots,V_K$ are as in Remark \ref{rem:grass}.
  Our aim is to show that for $\mu$-almost every $x \in \R^n$, there
  are arbitrarily small radii $r>0$ so that for every $j \in \{
  1,\ldots,K \}$ there is $\zeta=\zeta(x)\in S^{n-1} \cap V_j$ for which
  \begin{equation} \label{eq:goal}
    \min\bigl\{ \mu\bigl( X^+(x,r,\zeta,\alpha/2) \bigr), \mu\bigl(
    X^+(x,r,-\zeta,\alpha/2) \bigr) \bigr\} > c\mu\bigl( B(x,r) \bigr).
  \end{equation}
  From this the claim follows easily. Indeed, take $V\in G(n,n-m)$ and
  choose $V_j\in\{V_1,\ldots,V_K\}$ so that \eqref{eq:discrete_planes}
  holds. Let $\zeta\in V_j\cap S^{n-1}$ satisfy \eqref{eq:goal}. Then
  \begin{equation*}
    X^+(x,r,\pm\zeta,\alpha/2) \subset X(x,r,V_j,\alpha/2) \subset X(x,r,V,\alpha)
  \end{equation*}
  and the claim follows by combining \eqref{eq:goal} with the observation
  that for all $\zeta',\theta\in S^{n-1}$ we have
  \begin{equation*}
    X^+(x,r,\zeta',\alpha) \cap H(x,\theta,\alpha) = \emptyset \text{ or }
    X^+(x,r,-\zeta',\alpha) \cap H(x,\theta,\alpha) = \emptyset.
  \end{equation*}

  To prove \eqref{eq:goal}, we assume on the contrary that there is a
  Borel set $F \subset \R^n$ with $\mu(F)>0$ such that the assumptions
  \eqref{assu1}--\eqref{assu3} hold for every $x
  \in F$ in some arbitrarily small scales and that for some $r_0 > 0$ and
  for every $0 < r < r_0$, there is
  $j \in \{ 1,\ldots,K \}$ so that
  \begin{equation} \label{eq:antithesis}
    \mu\bigl( X^+(x,r,\zeta,\alpha/2) \bigr) \le c\mu\bigl( B(x,r) \bigr)
    \text{ or }
    \mu\bigl( X^+(x,r,-\zeta,\alpha/2) \bigr) \le c\mu\bigl( B(x,r) \bigr)
  \end{equation}
  for all $\zeta\in S^{n-1}\cap V_j$.
  Now choose a $\mu$-density point $x_1$ of $F$
  and a radius $0<r_1<r_0/3$ so that
  \begin{equation} \label{eq:density_point}
    \mu\bigl( B(x_1,r) \setminus F \bigr) < c\mu\bigl(B(x_1,r)\bigr)\leq
    c\mu\bigl( B(x_1,3r) \bigr)
  \end{equation}
for all $0<r<r_1$. Next we choose a radius $0<r<r_1$ and a collection
of balls $\mathcal{B}$ inside $B(x_1,r)$ satisfying the assumptions
\eqref{assu1}--\eqref{assu3}.
 Then we let
\[F_j=\{x\in B(x_1,r)\cap F\,:\,\eqref{eq:antithesis}\text{ holds with
  this }r\text{ for all }\zeta\in S^{n-1}\cap V_j\}.\]
for $j\in\{1,\ldots,K\}$. According to \eqref{eq:density_point} each ball of $\mathcal{B}$ contains points of $F$ and hence there is at least one
$j\in\{1,\ldots,K\}$ so that not less than $\#\mathcal{B}/K$ balls among
$\mathcal{B}$ contain points of $F_j$. Fix such a $j$, and let
$\mathcal{B}'=\{B\in\mathcal{B} : F_j\cap B\neq\emptyset\}$. Then the
assumption \eqref{assu3} implies that we may find $z\in\R^n$ and $q$
different balls $B_1,\ldots,B_q\in\mathcal{B}'$ so that they all
intersect the affine $(n-m)$-plane $V_j+z$. According to the
assumption \eqref{assu1} and Lemmas \ref{thm:erdos} and
\ref{thm:etamato}, we may
choose three balls $B^0,B^1,B^2$ among the balls $B_1,\ldots,B_q$ and
a point $x_0\in F_j\cap B^0$ so
that for some $\theta \in S^{n-1} \cap V_j$ we have
  \begin{equation*}
    B^1 \subset X^+(x_0,\theta,\alpha/2) \text{ and } B^2 \subset X^+(x_0,-\theta,\alpha/2).
  \end{equation*}
  But this contradicts \eqref{eq:antithesis} since
    $\min\{ \mu(B^1),\mu(B^2) \} > c\mu\bigl( B(x_1,3r) \bigr) \ge c\mu\bigl( B(x_0,2r) \bigr)$ by the assumption \eqref{assu2}.
\end{proof}

To complete the proof of Theorem \ref{thm:dimh_positive}, we need to
find collections $\mathcal{B}$ of balls as in the previous
proposition. To that end, we first work with cubes (instead of balls)
and use Theorem \ref{thm:jarv}.

\begin{lemma} \label{thm:homolemma}
  For any $n \in \N$, $m \in \{ 0,\ldots,n-1 \}$, $s>m$, $M \in \N$, $\tau \ge 1$, and $k > M^{1/(s-m)}$ there exist constants $c = c(n,m,s,M,\tau,k)>0$ and $0 < p = p(n,m,s,M,\tau,k) < 1$ satisfying the following: For every measure $\mu$ on $[0,1)^n$ with $\dimh(\mu) \ge s$ and for $\mu$-almost every $x \in [0,1)^n$,
  \begin{equation} \label{eq:large_cubes}
  \begin{split}
    \limsup_{l \to \infty} \tfrac{1}{l}\#\bigl\{ j \in \{ 1,\ldots,l
    \} : \;&\mu(Q_{\iii,k^n-M k^m}) > c\mu(\tau Q_\iii) \text{ where} \\ &\iii \in I_k^j \text{ is such that } x \in Q_\iii \bigr\} > p.
  \end{split}
  \end{equation}
  Here we use the enumeration of the $k$-adic cubes introduced in \S \ref{sec:not}.
\end{lemma}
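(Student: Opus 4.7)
The plan is to argue by contradiction through Theorem \ref{thm:jarv}. After normalizing $\mu$ to a probability measure on $[0,1)^n$, set $i = k^n - Mk^m$ and suppose that, for some $p \in (0,1)$ and $c > 0$ (to be determined as functions of $n,m,s,M,\tau,k$), there is a Borel set $A$ with $\mu(A) > 0$ on which the limsup in \eqref{eq:large_cubes} is at most $p$. Since the estimate I will derive on $\hom_k^i$ degrades unless $\mu(A)$ is close to $1$, I would first pass to a $k$-adic density-point cube: by the differentiation theorem for the $k$-adic filtration, $\mu(A\cap Q_{\iii_0})/\mu(Q_{\iii_0})\to 1$ for $\mu$-a.e.\ $x\in A$ along the $k$-adic cubes containing $x$, so for any prescribed $\delta>0$ we may fix $Q_{\iii_0}$ with $\mu(A\cap Q_{\iii_0}) \geq (1-\delta)\mu(Q_{\iii_0})$. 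Pushing $\mu|_{Q_{\iii_0}}$ forward by the affine bijection $Q_{\iii_0}\to[0,1)^n$ and normalizing produces a probability measure $\tilde\mu$ on $[0,1)^n$ with $\dimh(\tilde\mu)\geq s$, while the image of $A\cap Q_{\iii_0}$ has $\tilde\mu$-measure at least $1-\delta$. For $\mu$-a.e.\ $x\in Q_{\iii_0}$ the rescaled and original good/bad conditions at scale $j'$ (respectively $j_0+j'$) agree as soon as the $\tau$-enlargement of its $k$-adic cube lies inside $Q_{\iii_0}$, which holds for all sufficiently large $j'$; hence the rescaled upper good density is still at most $p$ almost everywhere on the image of $A\cap Q_{\iii_0}$.

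Relabelling the rescaled setup as $\mu,A$ (so $\mu$ is a probability measure on $[0,1)^n$ with $\dimh(\mu)\geq s$, $\mu(A)\geq 1-\delta$, and upper good density at most $p$ on $A$), I would apply Fatou's lemma to $\mathbf{1}_A(x)\cdot L^{-1}\#\{j\leq L : x \text{ bad at } j\}$, whose $\liminf$ is at least $(1-p)\mathbf{1}_A$. This gives
\[
\liminf_{L\to\infty}\frac{1}{L}\sum_{j=1}^L\sum_{\iii\in B_j}\mu(Q_\iii)\geq (1-p)(1-\delta),
\]
where $B_j=\{\iii\in I_k^j : \mu(Q_{\iii,i})\leq c\mu(\tau Q_\iii)\}$. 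To bound $\hom_k^i(\mu)$ I would split each $\sum_{\iii\in I_k^j}\mu(Q_{\iii,i})$ according to whether $\iii\in B_j$: on $B_j$ use the defining inequality together with the bounded-overlap estimate $\sum_{\iii\in I_k^j}\mu(\tau Q_\iii)\leq (\tau+1)^n$, and on its complement use the pigeonhole bound $\mu(Q_{\iii,i})\leq \mu(Q_\iii)/(Mk^m+1)$ (the $Mk^m+1$ largest sub-cubes all dominate $\mu(Q_{\iii,i})$ and together have mass at most $\mu(Q_\iii)$). Averaging over $j$ and using the displayed inequality yields
\[
\hom_k^i(\mu)\leq c(\tau+1)^n k^n + \frac{(p+\delta)k^n}{Mk^m+1}.
\]

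To finish, I would apply Theorem \ref{thm:jarv}. A direct computation shows that the upper bound on $\dimh$ it provides, viewed as a function $f$ of $\eta$, is monotone increasing on $(0,k^{-n}]$ (its derivative has sign $\log\frac{\eta(k^n-i)}{1-i\eta}$, which is negative for $\eta<k^{-n}$, so $-g/\log k$ increases) with $f(0^+)=\log(k^n-i)/\log k = m + \log_k M$. The hypothesis $k > M^{1/(s-m)}$ is precisely $m+\log_k M < s$, so by continuity there exists $\eta_0>0$ with $f(\eta_0)<s$. Choosing $c=\eta_0/[2(\tau+1)^n]$ and $p=\delta=\eta_0(Mk^m+1)/4$ — all positive constants depending only on $n,m,s,M,\tau,k$ (and lying in $(0,1)$ once $\eta_0$ is taken small enough) — makes the right-hand side of the $\hom$-estimate at most $k^n\eta_0$, so Theorem \ref{thm:jarv} forces $\dimh(\mu)\leq f(\eta_0)<s$, contradicting $\dimh(\mu)\geq s$. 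The main obstacle in this plan is the density-point rescaling: without it the $c$-term in the $\hom$-estimate carries an uncontrolled factor $1/\mu(A)$ that cannot be bounded by the parameters alone, whereas after rescaling the relative mass of $A$ can be made as close to $1$ as required, eliminating this dependence.
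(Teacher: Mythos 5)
Your strategy — contradiction via Theorem \ref{thm:jarv}, Fatou to control the average density of ``bad'' scales, the pigeonhole bound $\mu(Q_{\iii,i})\leq\mu(Q_\iii)/(Mk^m+1)$, and the monotonicity of the dimension bound in $\eta$ — is the right one, and the constant bookkeeping is sound. However, you handle the loss in $\mu(A)$ differently from the paper, and that is where the gap is. The paper does not rescale: it passes to the \emph{restriction} $\mu|_F$ (normalised so $\mu(F)=1$), bounds $\hom_k^i(\mu|_F)$, and relates the $\mu$-bad condition to a $\mu|_F$-bad condition by a ball-level density-point estimate $\mu(B(x,r))\le 2\mu(F\cap B(x,r))$ together with the ball--cube comparisons $\tau Q_\iii\subset B(x,\sqrt n\tau k^{-j})\subset 3\sqrt n\tau Q_\iii$; this is exactly where the factors $2$ and $(3\sqrt n\tau+2)^n$ come from. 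You instead zoom into a $k$-adic cube $Q_{\iii_0}$ with $\mu(A\cap Q_{\iii_0})\ge(1-\delta)\mu(Q_{\iii_0})$ and push forward by the affine map $Q_{\iii_0}\to[0,1)^n$, which buys you the cleaner overlap constant $(\tau+1)^n$ and avoids having to compare two different cube enumerations.

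The gap is in the claim that for $\mu$-a.e.\ $x\in Q_{\iii_0}$ the rescaled and original good/bad conditions eventually agree because ``the $\tau$-enlargement of its $k$-adic cube lies inside $Q_{\iii_0}$ for all sufficiently large $j'$.'' That inclusion holds precisely when $x$ lies in the \emph{interior} of $Q_{\iii_0}$; if $x$ sits on one of the left (included) faces of the half-open cube $Q_{\iii_0}$, then $\tau Q_\iii$ sticks out of $Q_{\iii_0}$ at \emph{every} scale. In that case $\tilde\mu(\tau Q_\jjj)=\mu(\tau Q_\iii\cap Q_{\iii_0})/\mu(Q_{\iii_0})$ can be strictly smaller than $\mu(\tau Q_\iii)/\mu(Q_{\iii_0})$, so ``original bad'' does \emph{not} imply ``rescaled bad,'' and the lower bound $\liminf\frac1L\#\{j\le L:\text{bad}\}\ge1-p$ does not transfer to $\tilde\mu$. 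You cannot dismiss this boundary set: if $s\le n-1$ a measure with $\dimh(\mu)\ge s$ may perfectly well charge a $k$-adic hyperplane (e.g.\ be carried by $\{0\}\times[0,1)^{n-1}$), so no choice of $Q_{\iii_0}$ is guaranteed to avoid it. To repair the argument you would need either to show that the bad scales lost this way have zero upper density $\mu$-a.e.\ (not automatic), or to switch to the paper's route of working with the restricted measure and a ball-level density point, which sidesteps the boundary entirely at the cost of worse, but still controlled, constants.
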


\begin{proof}
  Since $\log(M k^m)/\log(k)<s$, it follows by an easy calculation
  that we may choose a number $c=c(n,m,s,M,\tau,k)>0$ such that $0 <
  \eta := 3c (3\sqrt{n}\tau+2)^n < k^{-n}$ and
\begin{equation} \label{eq:c_def}
\begin{split}
  -\frac{1}{\log k} \biggl( &(k^n-M k^m) \eta\log\eta \\
  &+ \bigl(1-(k^n-M k^m)\eta\bigr) \log\biggl(\frac{1-(k^n-M k^m)\eta}{M k^m}\biggr)\biggr)<s.
\end{split}
\end{equation}
We will prove the claim with this choice of $c$, and with
$p=c(3\sqrt{n}\tau+2)^n$.
Suppose to the contrary that there is a Borel set $F \subset [0,1)^n$
with $\mu(F)>0$ such that \eqref{eq:large_cubes} does not hold for any
point of $F$.
Consider the restriction measure $\mu|_F$. In order to use Theorem
\ref{thm:jarv}, we scale our original measure so that $\mu(F)=1$. Note
that this
scaling does not affect the dimension of $\mu$ nor the condition
\eqref{eq:large_cubes}. It is enough to show that
\begin{equation}\label{eq:2homoeq}
  \hom_k^{k^n-Mk^m}(\mu|_F) \le 3ck^n(3\sqrt{n}\tau + 2)^n
\end{equation}
since this would imply $\dimh(\mu)\leq\dimh(\mu|_F)<s$ by Theorem \ref{thm:jarv} and \eqref{eq:c_def}. In order to calculate $\hom_k^{k^n-M k^m}(\mu|_F)$, we need to enumerate the $k$-adic cubes in terms of $\mu|_F$, not in terms of $\mu$. We denote cubes  enumerated in terms of $\mu|_F$ by $Q'_{\iii}$.

  Observe that if $Q \in \QQ_k^j$, then any ball centred at $Q$
  with radius $\sqrt{n}\tau k^{-j}$ contains the cube $\tau Q$ and is
  contained in the cube $3\sqrt{n}\tau Q$.
If $x\in F$ is a $\mu$-density point of $F$, then
$\mu\bigl( B(x,r) \bigr) \le 2\mu\bigl( F \cap B(x,r) \bigr)$ for all
$r>0$ small enough.
If $j \in \N$ is large enough and $\mu(Q_{\iii,k^n-M k^m}) \le
c\mu(\tau Q_\iii)$, where $\iii \in I_k^j$ is such that $x \in
Q_\iii$, then also
  \begin{equation*} 
  \begin{split}
    \mu(F \cap Q_{\iii,i}) &\le \mu(Q_{\iii,k^n-M k^m}) \le
    c\mu(\tau Q_\iii)
    \le c\mu\bigl( B(x,\sqrt{n}\tau k^{-j}) \bigr) \\
    &\le 2c\mu\bigl( F \cap B(x,\sqrt{n}\tau k^{-j}) \bigr)
    \le 2c\mu(F \cap 3\sqrt{n}\tau Q_\iii)
  \end{split}
  \end{equation*}
for all $i\in\{1,\ldots,k^n-M k^m\}$ and so also
\begin{equation}\label{eq:F_cubes}
 \mu(F \cap Q'_{\jjj,k^n-M k^m})\leq 2c\mu(F \cap 3\sqrt{n}\tau Q'_\jjj)
\end{equation}
where $Q'_{\jjj}=Q_{\iii}$.

We denote $E_k^j = \{ \iii \in I_k^j : \mu(F \cap Q'_{\iii,k^n-M k^m})
\le 2c\mu(F \cap 3\sqrt{n}\tau Q'_\iii) \}$ for $j \in \N$ and $N(x,l)
= \#\bigl\{ j \in \{ 1,\ldots,l \} : \iii|_j \in E_k^j \text{ where }
\iii \in I_k^l \text{ is such that } x \in Q'_\iii \bigr\}$ for $x \in
[0,1)^n$ and $l \in \N$. It follows from the choice of the set $F$ and
\eqref{eq:F_cubes} that
    $\liminf_{l \to \infty} \tfrac{1}{l} N(x,l) \ge 1-p$
  for $\mu$-almost every $x \in F$. Since $N(x,l)$ is constant on $Q'_\iii$ whenever $\iii \in I_k^l$, this implies
\begin{equation*}
    \liminf_{l \to \infty} \tfrac{1}{l} \sum_{j=1}^l \sum_{\iii \in
      E_k^j} \mu(F \cap Q'_\iii) = \liminf_{l \to \infty}
    \tfrac{1}{l} \int_F N(x,l) d\mu(x) \ge 1-p
\end{equation*}
  by Fatou's lemma, and consequently,
  \begin{equation*}
    \limsup_{l \to \infty} \tfrac{1}{l} \sum_{j=1}^l \sum_{\iii \notin
      E_k^j} \mu(F \cap Q'_\iii) \leq p.
  \end{equation*}
Moreover,
  \begin{equation*}
    \sum_{\iii \in I_k^j} \mu(F \cap 3\sqrt{n}\tau Q'_\iii) \le
    (3\sqrt{n}\tau + 2)^n
  \end{equation*}
  for every $j \in \N$, because each cube $Q \in \QQ_k^j$ intersects at most
  $(3\sqrt{n}\tau+2)^n$ larger cubes $3\sqrt{n}\tau
  \widetilde{Q}$ where $\widetilde{Q} \in \QQ_k^j$. Combining the previous two estimates and the choice of $p$, we now obtain
  \begin{align*}
    \hom_k^{k^n-M k^m}(\mu|_F)
    &= \limsup_{l \to \infty}
    \tfrac{k^n}{l} \sum_{j=1}^l \biggl( \sum_{\iii \in E_k^j}
    \mu(F \cap Q'_{\iii,k^n-M k^m})\\
    &\qquad\qquad\qquad\;\;\;
+ \sum_{\iii \notin E_k^j} \mu(F \cap
    Q'_{\iii,k^n-M k^m}) \biggr) \\
     &\le\limsup_{l \to \infty}
    \tfrac{k^n}{l} \sum_{j=1}^l \sum_{\iii \in I_k^j}
    2c\mu(F\cap 3\sqrt{n}\tau Q'_\iii) \\
 &\qquad\qquad\qquad\;\;\;
+\limsup_{l \to \infty}
    \tfrac{k^n}{l} \sum_{j=1}^l \sum_{\iii \notin E_{k}^j}
    \mu(F\cap Q'_{\iii,k^n-M k^m}) \\
    &\le 2ck^n(3\sqrt{n}\tau + 2)^n + pk^n\\
 &= 3ck^n\left(3\sqrt{n}\tau+2\right)^n.
  \end{align*}
This completes the proof.
\end{proof}

To finish the proof of Theorem \ref{thm:dimh_positive}, we just need to
combine the previous lemma and Proposition
\ref{thm:disjoint_balls}, and show how cubes may be replaced by
balls. We will choose the number of cubes $Q_{\iii,i}$ with $\mu(Q_{\iii,i}) > c\mu(\tau Q_\iii)$ (using the notation of Lemma \ref{thm:homolemma}) large enough so that we are able to choose sufficiently many appropriately separated balls $Q_{\iii,i} \subset B_i \subset \tau Q_\iii$. In order to find a ball containing $\tau Q_\iii$ with comparable measure, we need to work on a doubling scale $\iii$. For this, we will use Lemma \ref{thm:doubling_scales}.

\begin{proof}[Proof of Theorem \ref{thm:dimh_positive}.]
  Observe that without loss of generality, we may assume $\mu$ to be a probability measure with $\spt(\mu) \subset [0,1)^n$.
  Let $t=t(\alpha/2) \ge 1$ be the constant of Lemma \ref{thm:etamato} and $q=q(n-m,\alpha/(2t))$ from Lemma \ref{thm:erdos}. Moreover, let
  $K=K(n,m,\alpha)$ be as in Remark \ref{rem:grass} and choose $M=M(n,m,\alpha)\in\N$ so that
$M\geq \vol(n)(4t+2)^n n^{n/2} 8^m K q $,
where $\vol(n)$ is the $n$-dimensional volume of the
 unit ball.

If $Q \in
  \QQ_k^j$ for some $j,k \in \N$ and $\tau = 6\sqrt{n}$, it follows that
  \begin{gather}
    2Q \subset B(x,2\sqrt{n}k^{-j}) \subset \tau Q, \label{eq:ball_cube1} \\
    B(y,\sqrt{n}k^{-j-1}) \subset B(x,2\sqrt{n}k^{-j}) \label{eq:ball_cube2}
  \end{gather}
  for every $x,y \in Q$. Choose $k=k(n,m,s,\alpha)\in\N$ so that
  $k>\max\{ M^{1/(s-m)},3 \}$ and let $c_1
  =c(n,m,s,M,\tau,k)> 0$ and $0 <
  p = p(n,m,s,M,\tau,k) < 1$ be as in Lemma \ref{thm:homolemma} and
  $c_2 = c(n,k,1-p/2) > 0$ be the constant of Lemma
  \ref{thm:doubling_scales}. Combining these lemmas it follows that
  for $\mu$-almost all $x \in [0,1)^n$
  there are arbitrarily large $j \in \N$ and $\iii \in I_k^j$ with
  $x \in Q_\iii$ such that with $r = 2\sqrt{n}k^{-j}$ we have
  \begin{align}
 \mu\bigl( B(x,r) \bigr) &\ge c_2\mu\bigl( B(x,2\sqrt{n}k^{-j+1})
 \bigr), \label{eq:muest1}\\
 \mu(Q_{\iii,k^n-M k^m}) &> c_1\mu(\tau Q_\iii). \label{eq:muest2}
  \end{align}
  To obtain \eqref{eq:muest1}, we use Lemma \ref{thm:doubling_scales}
  with $\gamma=2\sqrt{n}$.
  To complete the proof, the only thing to check is that with any such $x$ and $r$ we may find a collection $\mathcal{B}$ satisfying the assumptions
  \eqref{assu1}--\eqref{assu3} of Proposition \ref{thm:disjoint_balls}.

  Combining \eqref{eq:muest2}, \eqref{eq:ball_cube1}, and \eqref{eq:muest1} and recalling that $k \geq 3$, we have
  \begin{equation} \label{eq:big_cubes}
    \mu(Q_{\iii,i}) > c_1\mu\bigl( B(x,r) \bigr) \ge c_1 c_2 \mu\bigl(
    B(x,3r) \bigr)
  \end{equation}
  for every $i \in \{ k^n-M k^m,\ldots,k^n \}$. Let $B_i =
  B(y_i,\sqrt{n}k^{-j-1})$ where $y_i$ is the centre point of $Q_{\iii,i}$. Then
  $\mu(B_i) > c_1c_2 \mu\bigl( B(x,3r) \bigr)$ and $B_i \subset
  B(x,r)$ by \eqref{eq:ball_cube2}. By a simple volume argument, we
  have $\#\{ j : 2tB_i \cap 2t B_j \ne \emptyset \} \le
  \vol(n)(4t+2)^n n^{n/2}$
 for every $i$. Consequently, there is a sub-collection $\mathcal{B}$ of the
 collection $\{B_i\}$ containing at least
  $8^m Kq k^m$ balls so that
 the collection $\{2tB : B\in\mathcal{B}\}$ is
  pairwise disjoint and $\mu(B) > c_1 c_2\mu\bigl( B(x,3r) \bigr)$  for all
  $B\in\mathcal{B}$.
 To check that also the assumption \eqref{assu3} of Proposition
 \ref{thm:disjoint_balls} holds, choose any sub-collection
 $\mathcal{B}'$ of $\mathcal{B}$ with
 $\#\mathcal{B}'\geq\#\mathcal{B}/K\geq 8^m q k^m$ and fix $V\in
 G(n,n-m)$. Since the $m$-dimensional ball
 $\proj_{V^\bot}\bigl(B(x,r)\bigr)$ may be covered by $8^m
 k^m$ balls
   of radius $\sqrt{n} k^{-j-1}$, it follows that some translate of $V$ must
   hit at least $q$ balls from the collection $\mathcal{B}'$. Here $V^\bot$ denotes the orthogonal complement of $V$. Thus we
   have verified the assumptions of Proposition
   \ref{thm:disjoint_balls} and the claim follows with
   $c=c(n,m,s,\alpha)=c_1 c_2$.
\end{proof}

\begin{remark} \label{rem:quantitative}
(1) Our method to prove Theorem \ref{thm:dimh_positive} could
be pushed further to obtain the following quantitative upper conical
density theorem: Under the assumptions of Theorem \ref{thm:dimh_positive}, we have
\begin{equation*}
\limsup_{l\rightarrow\infty} \tfrac1l \#\bigl\{ j \in \{1,\ldots,l\} : \inf_{\atop{\theta \in S^{n-1}}{V \in
        G(n,n-m)}} \frac{\mu\bigl( X(x,2^{-j},V,\alpha)\bigr) \setminus
      H(x,\theta,\alpha) \bigr)}{\mu\bigl( B(x,2^{-j})\bigr)} > c \bigr\} > p
\end{equation*}
for $\mu$-almost all points $x\in\R^n$ with some constants
$c=c(\alpha,s,n,m)>0$ and $p=p(\alpha,s,n,m)>0$.

(2) One could also apply Mattila's result \cite[Theorem
3.1]{Mattila1988} to obtain results analogous to Theorem
\ref{thm:dimh_positive}. More precisely, the quantity
\[\inf\limits_{\atop{\theta \in S^{n-1}}{V \in
        G(n,n-m)}} \frac{\mu\bigl(X(x,r,V,\alpha)\setminus
        H(x,\theta,\alpha)\bigr)}{\mu\bigl(B(x,r)\bigr)}\] can be
      replaced by
      \[\inf_C \frac{\mu\bigl(C_x\cap
        B(x,r)\bigr)}{\mu\bigl(B(x,r)\bigr)},\] where the infimum is
      over all Borel sets $C\subset G(n,n-m)$ with
      $\gamma(C)>\delta>0$. Here $C_x=\bigcup_{V\in C}(V+x)$, and $\gamma$ is the natural isometry
      invariant Borel probability measure on the Grasmannian
      $G(n,n-m)$. The obtained constant $c>0$ then depends on
      $n$, $m$, $s$, and $\delta$.

Thus, using Mattila's method would yield more general results in the
sense that the cones $X(x,V,\alpha)$ could be replaced by the more
general cones $C_x$. On the other hand, our method allows to
consider also the non-symmetric cones $X(x,V,\alpha)\setminus
H(x,\theta,\alpha)$ and may be used to obtain quantitative estimates
as in Remark \ref{rem:quantitative}(1).
\end{remark}

\section{Examples and open problems} \label{sec:ex}

Inspecting the proof of Proposition \ref{thm:disjoint_balls}, we see that
the assumptions of Theorem \ref{thm:dimh_positive} imply that we may,
in fact, find directions $\theta_{x,V}\in S^{n-1}\cap V$, depending on
the point $x$, such that
\begin{equation} \label{eq:claim2}
  \limsup_{r \downarrow 0} \inf_{V \in G(n,n-m)}
      \frac{\min\bigl\{\mu\bigl(X^+(x,r,\theta_{x,V},\alpha)\bigr), \mu\bigl(X^+(x,r,-\theta_{x,V},\alpha) \bigr)\bigr\}}{\mu\bigl(B(x,r)\bigr)} > c
\end{equation}
for $\mu$-almost all $x\in\R^n$. If $m=0$, we do not know if the
assumption $\dimh(\mu)>0$ is necessary or not:

\begin{question} \label{question}
Given $\alpha>0$ and $n\in\N$, does there exist a constant
$c(n,\alpha)>0$ so that for all non-atomic measures $\mu$ on $\R^n$ one
could pick $\theta=\theta(x)\in S^{n-1}$ for $\mu$-almost all
$x\in\R^n$ so that
\[\limsup_{r\downarrow 0} \frac{\min\bigl\{\mu\bigl(X^+(x,r,\theta,\alpha)\bigr),
\mu\bigl(X^+(x,r,-\theta,\alpha)\bigr)\bigr\}}{\mu\bigl(B(x,r)\bigr)}>c\,?\]
\end{question}

\begin{remark} \label{rem:question}
(1) A positive answer would also improve Theorem \ref{thm:all}.
However, the question is relevant only for $n\geq2$. If $n=1$, there
is no difference between the above question and Theorem \ref{thm:all}.

(2) Examples \ref{ex:Pertti} and \ref{ex:LastExampl} below
show that we cannot hope to
obtain \eqref{eq:claim} if the dimension of $\mu$ is $m$, even if
$\mu$ is purely unrectifiable (see the definition before Example \ref{ex:LastExampl}). Thus, Question \ref{question} is really
only about non-atomic measures with zero Hausdorff dimension.
\end{remark}

The following example shows why we cannot apply Proposition
\ref{thm:disjoint_balls} to answer Question \ref{question}. For
simplicity, we will work on $\R$, although similar construction works
also in higher dimensions.

\begin{example}
  There is a non-atomic measure $\mu$ on $\R$ so that it fails to
  satisfy the assumptions of Proposition \ref{thm:disjoint_balls} with
  $m=0$ for all $c>0$.
\end{example}

\begin{proof}[Construction.]
We will construct the measure $\mu$ on $[0,1)$. Our aim is to show that there is no constant $c>0$ so that for $\mu$-almost all $x \in [0,1)$ there would be arbitrarily small radii $r>0$ such that we could find intervals $I_1,\ldots,I_6 \subset (x-r,x+r)$ for which
  \begin{gather}
    3I_i \cap 3I_j = \emptyset \text{ whenever } i \ne j, \label{eq:3Idisjoint} \\
    \mu(I_i) > c\mu(x-3r,x+3r) \text{ for all } i. \label{eq:mubig}
  \end{gather}
  To construct $\mu$, we simply take any sequence $0 < q_i < 1/2$ so that $\sum_{i=1}^{\infty} q_i = \infty$ and $q_i \downarrow 0$ as $i \rightarrow \infty$. Then we construct a binomial type measure using the weights $q_i$ and $p_i = 1-q_i$. Let $\mu([0,1/2)) = p_1$ and $\mu([1/2,1)) = q_1$. If $i \in \N$ and $J \in \QQ_2^i$, then for $I_1,I_2 \in \QQ_2^{i+1}$, where $I_1 \subset J$ is the left-hand side subinterval and $I_2 \subset J$ is the right-hand side subinterval, we set $\mu(I_1) = p_{i+1}\mu(J)$ and $\mu(I_2) = q_{i+1}\mu(J)$. This construction extends to a measure by standard methods.

  Suppose there is a constant $c>0$ for which \eqref{eq:3Idisjoint} and \eqref{eq:mubig} hold. Choose $i_0 \in \N$ so that
  \begin{equation} \label{eq:qi}
    q_i < c/3 \quad \text{for all } i > i_0.
  \end{equation}
  We may assume that \eqref{eq:3Idisjoint} and \eqref{eq:mubig} are valid for $I_1,\ldots,I_6 \subset I := (x-r,x+r) \subset [0,1)$ with $r \ll 2^{-i_0}$. Choose $l \in \N$ for which $2^{-l-1} \le 2r < 2^{-l}$. Then $I$ intersects at most three dyadic intervals of length $2^{-l-1}$ and one of these dyadic intervals, say $J$ must contain at least two of the intervals $I_1,\ldots,I_6$, say $I_1$ and $I_2$. Now $J \subset 3I$ so $\mu(I_1),\mu(I_2) > c\mu(3I) \ge c\mu(J)$.

  Let $J_0 \subset J$ be the largest dyadic sub-interval of $J$ with the same left-hand side end point as $J$ for which
  \begin{equation} \label{eq:J1small}
    \mu(J_0) < c\mu(J).
  \end{equation}
  Let $y$ be the right-hand side end point of $J_0$ and let $J_1,\ldots,J_k$ be the maximal dyadic sub-intervals of $J$ which do not intersect $J_0$. So $J = J_0 \cup J_1 \cup \cdots \cup J_k$ and $J_i \cap J_j = \emptyset$ whenever $i \ne j$. It follows from the construction of $\mu$ and \eqref{eq:qi} that $\mu(J_i) \le \tfrac{c}{3} \mu(J)$ for all $i \ge 1$. So if $y \notin I_1$, then $I_1 \cap J_0 = \emptyset$ by \eqref{eq:J1small}, and $I_1$ has to intersect at least three of the intervals $J_1,\ldots,J_k$. Then $J_i \subset I_1$ for at least one $i \ge 1$. Since $J_0 \subset 3J_i$ for all $i$ it follows that also $J_0 \subset 3I_1$. In particular $y \in 3I_1$, in any case. By the same argument also $y \in 3I_2$, so $3I_1 \cap 3I_2 \neq \emptyset$ contrary to \eqref{eq:3Idisjoint}.

  Observe that one may replace $3$ in \eqref{eq:mubig} by any number
  $a > 1$, but then $6$ (the number of the chosen sub-intervals) needs
  to be replaced by $n=n(a) \in \N$.
\end{proof}


To finish the paper, we give the examples mentioned in Remark \ref{rem:question}(2).
Suppose that $A\subset\R^n$ is purely $m$-unrectifiable and satisfies
$0<\HH^m(A)<\infty$.
We refer the reader to \cite{Mattila1995} for the basic properties of unrectifiable sets.
If $0<\alpha<1$ and $V\in G(n,n-m)$, it is well known that
\begin{equation}\label{eq:unrect}
\limsup_{r\downarrow 0} \frac{\HH^m\bigl(A \cap X(x,r,V,\alpha)\bigr)}{(2r)^m} > c(m,\alpha) > 0
\end{equation}
for $\HH^m$-almost all $x\in A$. The following example, answering
\cite[Question 4.2]{KaenmakiSuomala2008}, shows that this
cannot be improved to
\begin{equation*}
\limsup_{r\downarrow 0} \inf_{V \in G(n,n-m)} \frac{\HH^m\bigl(A \cap X(x,r,V,\alpha)\bigr)}{(2r)^m} > c(m,\alpha) > 0.
\end{equation*}

\begin{example} \label{ex:Pertti}
There exists a purely $1$-unrectifiable compact set $A \subset \R^2$ with $0 < \HH^1(A) < \infty$ so that for every $0 < \alpha \le 1$
\begin{equation}\label{eq:examplecondition}
 \lim_{r \downarrow 0} \inf_{\ell \in G(2,1)} \frac{\HH^1\bigl(A \cap X(x,r,\ell,\alpha)\bigr)}{2r} = 0
\end{equation}
for every $x \in A$.
\end{example}

\begin{proof}[Construction.]
We construct the set $A$ using a nested sequence of compact sets. The
first set $A_0$ is just the unit ball $B(0,1)$. To define the rest of the construction sets, we apply the ideas found e.g.\ in \cite[\S 5.3]{MartinMattila1988} and \cite[\S 5.8]{Preiss1987}.

Define a collection of mappings $f_{i,j}$ with $i\in \N$ and $j \in \{1, \dots, 2i^2\}$ as
\begin{align*}
f_{i,j}(x,y)=\frac{1}{2i^2}\big( & (\cos(\alpha_i)x+2j-2i^2-1)-(-1)^j\sin(\alpha_i)y, \\
& (-1)^j\sin(\alpha_i)x+\cos(\alpha_i)y \big),
\end{align*}
where $\alpha_i = 1/\sqrt{i}$. Then define sets $A_n$ for $n \in \N$, as
\[
 A_n = \bigcup_{\substack{i\in \{1,\dots,n\}\\ j_i\in\{1,\dots,2i^2\}}} f_{1,j_1}\circ \cdots \circ f_{n,j_n} (A_0).
\]
Finally, set $A=\bigcap_{n=1}^\infty A_n$. See Figure \ref{fig:balls} to see the first three steps, $A_0$, $A_1$, and $A_2$, of the construction. We refer to the radius of step $n$ construction ball as $R_n$. That is $R_0 =1$ and $R_n = \frac{R_{n-1}}{2n^2}$ for $n\ge 1$.
\begin{figure}
\psfrag{a}{$\alpha_1$}
\psfrag{b}{$\alpha_2$}
\centering
\includegraphics[width=0.5\textwidth]{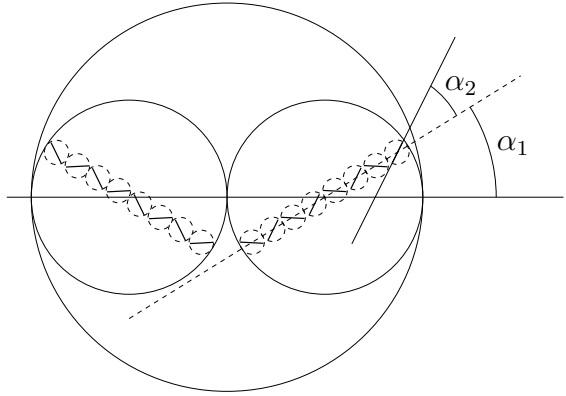}
\caption{An illustration for the construction of the set $A$ in Example \ref{ex:Pertti}.}
\label{fig:balls}
\end{figure}

Let us verify that the set $A$ admits the desired properties. It is
evident from the construction that $A\subset B(0,1)$ is a compact set
with $0<\HH^1(A)\leq 1$. The upper bound is trivial as the sum of the diameters of level $n$ construction balls is always one. If $F \subset B(0,1)$, then there is $n$ and a collection $\BB$ of level $n$ construction balls covering $F \cap A$ so that $\sum_{B \in \BB} \diam(B) < 10\diam(F)$. This gives the lower bound. Moreover, we have $\HH^1(A\cap
B_n)=R_n\HH^1(A)$ for each construction ball $B_n$ of level $n$.
For each $x \in A$ there is a unique address $a(x) =
\bigl(a_1(x),a_2(x),\ldots\bigr)$ so that  $a_i(x) \in \{1,\dots, 2i^2\}$
and
\[
 \{ x \} = \bigcap_{i=1}^\infty f_{1, a_1(x)} \circ \cdots \circ f_{i,a_i(x)} (A_0).
\]
By Kolmogorov's zero-one law and the three-series criteria (for
example, see \cite{Loeve1963}), the series $\sum_{i=1}^n(-1)^{a_i(x)}\alpha_i$ diverges for $\HH^1$-almost every $x \in A$. Take such a point $x$ and fix an angle $\beta \in [0, 2\pi]$. Since $\alpha_i \downarrow 0$ as $i \to \infty$, there is $\eps > 0$ so that
\[
 \limsup_{n \to \infty} \min_{k\in \Z}|\beta - \sum_{i=1}^n(-1)^{a_i(x)}\alpha_i + k\pi| > 4\eps.
\]
Let $\ell_\beta$ be the line with an angle $\beta$. We will
show that
\begin{equation}\label{eq:notapprtan}
\limsup_{r \downarrow 0} \frac{\HH^1\bigl(A \cap B(x,r) \setminus X(x,\ell_\beta,\eps)\bigr)}{r} > 0.
\end{equation}
This means that $\ell_\beta$ is not an approximate tangent of $A$ at
$x$ and thus $A$ is purely $1$-unrectifiable, see for example
\cite[Corollary 15.20]{Mattila1995}. Take $n \in \N$ large enough so that
\[
\min_{k\in \Z}|\beta - \sum_{i=1}^n(-1)^{a_i(x)}\alpha_i+ k\pi| > 2\eps.
\]
Since all the $2n^2$ level $n$ construction balls inside the ball
$f_{1,a_1(x)}\circ \cdots \circ f_{n-1,a_{n-1}(x)}(A_0)$ hit the line
from $x$ with direction $\sum_{i=1}^n(-1)^{a_i(x)}\alpha_i$, there
exists $K$ depending only on $\eps$ (it suffices to take
$K>10/\eps$) so that
\[
\#\{m : B_m \cap X(x,R_{n-1},\ell_\beta,\eps) \ne \emptyset\} \le K,
\]
where $B_m = f_{1,a_1(x)}\circ \cdots \circ f_{n-1,a_{n-1}(x)} \circ f_{n,m} (A_0)$. This yields an adequate surplus of balls outside the cone $X(x,\ell_\beta,\eps)$ giving
\[
 \frac{\HH^1\bigl(A\cap B(x,R_{n-1}) \setminus X(x,\ell_\beta,\eps)\bigr)}{R_{n-1}} \ge \frac{2n^2-K}{2n^2}\HH^1(A)
\]
and therefore \eqref{eq:notapprtan} holds.

It remains to verify \eqref{eq:examplecondition} holds. Let $x\in A$
and $0 < \alpha \le 1$. First observe from the construction that with
any $n \in \N$ and $y \in A \setminus \bigl(f_{1,a_1(x)}\circ \cdots \circ
f_{n-1,a_{n-1}(x)}(A_0)\bigr)$ we have
\[
\dist(y,x)\ge \bigl(1-\cos(\alpha_{n})\bigr)R_{n-1} \ge R_{n-1}/(4n) =
2n^2 R_n/(4n) = n R_n/2.
\]
Let $0<r<1$ and choose the $n\in \N$ for which $nR_n \leq
2r<(n-1)R_{n-1}$. Let $\ell$ be the line perpendicular to the direction
$\sum_{i=1}^{n-1}(-1)^{a_i(x)}\alpha_i$. Now there are numbers
$M,n_0\in N$ depending only on $\alpha$ (letting $M>10/\alpha$ and
$n_0$ so that $\alpha_{n_0-1}<\alpha/10$ will do) so that
if $n\geq n_0$, then
\[
\#\{m : B_m \cap X(x,r,\ell,\alpha) \ne \emptyset\} \le M,
\]
where $B_m$'s denote the construction balls of level $n$. Thus
\[
\frac{\HH^1\bigl(A \cap X(x,r,\ell,\alpha)\bigr)}{2r} \le \frac{M R_n \HH^1(A)}{nR_n} = \frac{M}{n} \HH^1(A) \longrightarrow 0,
\]
as $r\downarrow 0$.
\end{proof}

A measure $\mu$ on $\R^n$ is called \emph{purely $m$-unrectifiable} if
$\mu(A)=0$ for all $m$-rectifiable sets $A\subset\R^n$.
The following example shows that a result analogous to
\eqref{eq:unrect} does not hold for arbitrary purely $m$-unrectifiable
measures on $\R^m$.

\begin{example}\label{ex:LastExampl} There exists $\ell \in G(2,1)$ and a measure $\mu$ on $\R^2$ so that $\mu$ is purely $1$-unrectifiable and for every $0<\alpha<1$
\begin{equation}\label{eq:conicaldensityzero}
 \lim_{r\downarrow 0} \frac{\mu\bigl(X(x,r,\ell,\alpha)\bigr)}{\mu\bigl(B(x,r)\bigr)} = 0
\end{equation}
for $\mu$-almost all $x \in \R^2$.
\end{example}

\begin{proof}[Construction.]
We construct the measure $\mu$ using families of maps
\[
  \bigl\{f_{k,h}^i : k \in \{ 0,\ldots,i-1 \} \text{ and } h \in \{ 0,\ldots,2i^2-1 \} \bigr\}_{i=1}^\infty
\]
with
\[
 f_{k,h}^i\bigl((x,y)\bigr) = \biggl(\frac{(-1)^ki+x}{2i^3},\frac{2ki^2+h+y}{2i^3}\biggr)
\]
for every $i \in \{2,3,\dots\}$, $ k \in \{0, \dots, i-1\}$ and $h \in \{0, \dots, 2i^2-1\}$.

With $\{f_{k,h}^i\}_{k,h}$ define $F_i$ that maps a measure $\nu$ on $\R^2$ to a measure $F_i(\nu)$ so that for every Borel set $A \subset \R^2$ we get
\begin{equation}\label{eq:massdistribution}
 F_i(\nu)(A) = \sum_{k = 0}^{i-1} \sum_{h=0}^{2i^2-1} C_i(2i)^{-|h-i^2+\frac{1}{2}|}\nu\bigl((f_{k,h}^i)^{-1}(A)\bigr),
\end{equation}
where the constant $C_i$ is chosen so that $\sum_{k = 0}^{i-1} \sum_{h=0}^{2i^2-1} C_i(2i)^{-|h-i^2+\frac{1}{2}|} = 1$. Applying the map $F_i$ divides the measure into $i$ vertical strips. These strips correspond to the index $k$ in the mappings $f_{k,h}^i$. Inside the strips the measure is divided to $2i^2$ blocks using the index $h$. The measure is concentrated near the centres of the strips by giving different weights to the maps $f_{k,h}^i$ with different values of $h$. See Figure \ref{fig:lasexample} to get the idea of the distribution of mass under map $F_i$.
\begin{figure}
\psfrag{f}{$F_2$}
\centering
\includegraphics[width=0.8\textwidth]{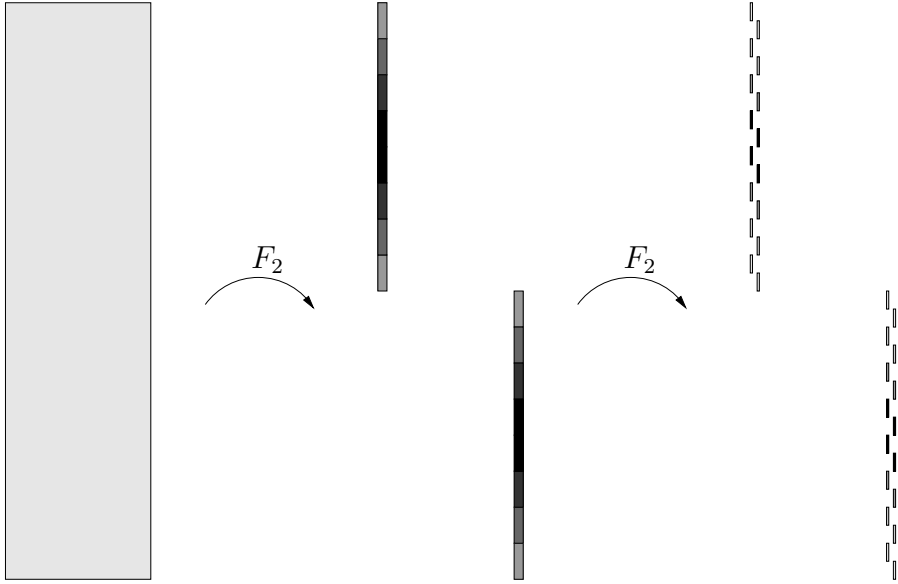}
\caption{The distribution of the measure with map $F_2$ in Example \ref{ex:LastExampl}.}
\label{fig:lasexample}
\end{figure}

Let $N_1=0$ and for $i \in \{ 2,3,\ldots \}$ let $N_i$ be the smallest integer so that
\begin{equation}\label{eq:numberofscales}
 \biggl(1-\frac{C_i}{8(2i)^{i^2-\frac{3}{2}}}\biggr)^{N_i} < \frac{1}{2}.
\end{equation}
Integers $N_i$ determine how many times we have to use map $F_i$ when constructing the measure $\mu$ in order to make the resulting measure unrectifiable. With these numbers define $(I_j)_{j=1}^\infty$ with
\[
 I_{p+\sum_{i=1}^{t-1}N_i}=t
\]
for every $t \in \{2,3,\dots\}$ and $p\in \{1,\dots,N_t\}$. Also let $M_j = \prod_{i=1}^j(2I_i^3)$.
Finally define $\mu$ to be the weak limit of
\[
 F_{I_1}\circ F_{I_2} \circ \cdots \circ F_{I_m}(\mu_0)
\]
as $m \to \infty$. Here $\mu_0$ is any compactly supported Borel probability measure on $\R^2$. (Take for example $\HH^1$ restricted to $\{0\}\times[0,1]$.) With $i\in \N$, $k\in\{1,\dots, M_{i-1}I_i\}$ and $h\in\{1,\dots,2I_i^2\}$ define strips
\[
 S_{i,k} = \spt(\mu) \cap \biggl( \R \times \biggl[\frac{2(k-1)I_i^2}{M_i},\frac{2kI_i^2}{M_i}\biggr] \biggr)
\]
and blocks
\[
 B_{i,k,h} = \spt(\mu) \cap \biggl(\R \times \biggl[\frac{2(k-1)I_i^2+h-1}{M_i},\frac{2kI_i^2+h}{M_i}\biggr]\biggr).
\]

To prove the unrectifiability, let us first look at vertical curves: Let $\gamma$ be a $C^1$-curve in $\R^2$ so that $|\frac{\partial\gamma}{\partial y}| \ge\frac{1}{3}|\gamma'|$. Take $i \in \N$. Now for any $k \in \{1, \dots, I_{i+1}-1\}$ and $t \in \{0,\dots,M_i-1\}$  either
\[
 \gamma \cap B_{i+1,2I_{i+1}^3t+k,2I_{i+1}^2} = \emptyset \text{ or }\gamma \cap B_{i+1,2I_{i+1}^3t+k+1,1} = \emptyset.
\]
This means that when we look at two consecutive strips $S_{i+1,2I_{i+1}^3t+k}$ and $S_{i+1,2I_{i+1}^3t+k+1}$, we see that the curve $\gamma$ cannot meet both the uppermost block of the lower strip and the lowest block of the upper strip. This is because vertically these blocks are next to each other, but horizontally the distance is roughly at least $I_{i+1}$ times the width of the block. Hence the curve $\gamma$ misses more than one fourth of all the end blocks of the strips of the level $I_{i+1}$ construction step. Therefore by iterating and using inequality \eqref{eq:numberofscales}, we get
\begin{align*}
 \mu(\gamma) & \le \prod_{i=1}^M \biggl(1-\frac{I_{i+1}C_{I_{i+1}}(2I_{i+1})^{-I_{i+1}^2+\frac{1}{2}}}{4} \biggr)\\
 &\le \prod_{m=2}^{I_M-1}\biggl(1-\frac{C_m}{8(2m)^{m^2-\frac{3}{2}}}\biggr)^{N_m} < 2^{-I_M+2} \to 0
\end{align*}
as $M \to \infty$.

Next we look at horizontal curves: Let $\gamma$ be a $C^1$-curve in $\R^2$ so that $|\frac{\partial\gamma}{\partial x}| \ge\frac{1}{3}|\gamma'|$. Take $i \in \N$ and $t \in \{0,\dots,M_i-1\}$. Now there are at most two $k \in \{1,\dots, I_{i+1}\}$ so that
\[
 \gamma \cap S_{i+1,tI_{i+2}+k} \ne \emptyset.
\]
By repeating this observation
\[
 \mu(\gamma) \le \prod_{i=2}^M\frac{2}{I_i} \to 0
\]
as $M \to \infty$. Take any $C^1$-curve $\gamma$ in $\R^2$. Because it can be covered with a countable collection of vertical and horizontal $C^1$-curves defined as above, we have $\mu(\gamma) = 0$. Thus, the measure $\mu$ is purely $1$-unrectifiable.

Let $\ell\in G(2,1)$ be the horizontal line. We show that cones around $\ell$ have small measure in the sense of equality \eqref{eq:conicaldensityzero}. To do this fix $0 < \alpha < 1$ and take the smallest $i_0 \in \{3,4,\dots\}$ so that
\begin{equation}\label{eq:anglegivingextrablock}
 \frac{1}{I_{i_0}} < \frac{\sqrt{1-\alpha}}{4}.
\end{equation}
Now take $i \in \{i_0+1,i_0+2,\dots\}$, a point $x \in \spt(\mu)$ and a radius $r \in [M_i^{-1},M_{i-1}^{-1}]$. Let $k_1 \in \N$ so that $x \in S_{i,k_1}$. Assume that there are at most two  $k' \in \N$ so that
\[
 X(x,r,\ell,\alpha) \cap S_{i+1,k'} \ne \emptyset.
\]
Then
\begin{equation}\label{eq:coneestimate1}
  \mu\bigl(X(x,r,\ell,\alpha)\bigr) \le \frac{2\mu\bigl(B(x,r)\bigr)}{I_{i+1}}.
\end{equation}

Assume then that there are at least three such $k'$. If this is the case, then the cone $X(x,r,\ell,\alpha)$ must hit another large vertical strip $S_{i,k_2}$ with $k_2 \in \{k_1 - 1, k_1+1\}$.
Inequality \eqref{eq:anglegivingextrablock} yields the existence of a block $B_{i,k_1,u} \subset B(x,r)$ whose vertical distance to the centre of the strip $S_{i,k_1}$ is strictly less than the vertical distance from the centre of the strip $S_{i,k_2}$ to any of the blocks $B_{i,k_2,u'}$ that intersect the cone $X(x,r,\ell,\alpha)$. Now the fact that we concentrated measure to the centre using equation \eqref{eq:massdistribution} gives
\[
 \mu(B_{i,k_1,u})\ge \frac{(2I_i)^u}{2\sum_{p=1}^{u-1}(2I_i)^p}\mu\bigl(X(x,r,\ell,\alpha)\bigr)
\]
and hence
\[
 \mu\bigl(X(x,r,\ell,\alpha)\bigr) \le \frac{2\mu\bigl(B(x,r)\bigr)}{I_i}.
\]
This together with \eqref{eq:coneestimate1} shows \eqref{eq:conicaldensityzero} as $i$ tends to infinity.
\end{proof}


\end{document}